\shorttitle{Continuum Line-of-Sight Percolation on Poisson-Voronoi Tessellations}
\pgfplotsset{compat=newest}
\pgfplotsset{every axis legend/.append style={
at={(0,0)},
anchor=north east}} 
\newcommand{\Probab}{\mathbb{P}}
\newcommand{\R}{\mathbb{R}}
\newcommand{\indep}{\rotatebox[origin=c]{90}{$\models$}}
\theoremstyle{empty}
\newtheorem{duplicate}{Reformulation}
\begin{document}

\title{Continuum Line-of-Sight Percolation on Poisson-Voronoi Tessellations}

\authorone[Orange Labs Networks]{Quentin Le Gall}

\authortwo[Inria - \'Ecole Normale Sup\'erieure] {Bart\L{}omiej B\L{}aszczyszyn}

\authorone[Orange Labs Networks]{\'Elie Cali}

\authorone[Orange Labs Networks]{Taoufik En-Najjary}

\addressone{Orange Labs Networks, Modelling and Statistical Analysis, 44 avenue de la R\'epublique, CS 50010, 92326 Ch\^atillon Cedex, France}
\addresstwo{Centre de recherches Inria de Paris, DYOGENE, 2 rue Simone Iff, CS 42112, 75589 Paris Cedex 12, France}

\begin{abstract}
   In this work, we study a new model for continuum line-of-sight percolation in a random environment driven by the  Poisson-Voronoi tessellation in the $d$-dimensional Euclidean space. The edges (one-dimensional facets, or simply 1-facets) of this tessellation are the support of a Cox point process, while the vertices (zero-dimensional facets or simply 0-facets) are the support of a Bernoulli point process. Taking the superposition $Z$ of these two processes, two points of $Z$ are linked by an edge if and only if they are sufficiently close and located on the same edge (1-facet) of the supporting tessellation. We study the percolation of the random graph arising from this construction and prove that a 0-1 law, a subcritical phase as well as a supercritical phase exist under general assumptions. Our proofs are based on a coarse-graining argument  with some notion of stabilization and asymptotic essential connectedness  to investigate continuum percolation for Cox point processes. We also give numerical estimates of the critical parameters of the model in the planar case, where our model is intended to represent telecommunications networks in a random environment with obstructive conditions for signal propagation.  
\end{abstract}

\keywords{Percolation, Cox process, Poisson-Voronoi tessellation, Coarse-graining arguments, Simulation}

\ams{60K35;60G55;60D05}{68M10;90B15}

\setcounter{footnote}{0}
\renewcommand\thefootnote{\arabic{footnote}}

\section{Introduction}
\subsection{Background and motivation}
Bernoulli bond percolation, introduced in the late fifties by Broadbent and Hammersley \cite{broadbent1957percolation}, is one of the simplest mathematical models featuring phase transition. Since then, this model has been generalized in many different ways, making percolation theory a broader and still very active research topic today. 
\\ \indent Since the seminal work~\cite{gilbert1961random} of Gilbert, random graphs have been a key mathematical tool for the modelling of telecommunications networks. Good connectivity of the network is then interpreted by percolation of its associated connectivity graph. Over the years, Gilbert's original model has been refined and lots of mathematical models for telecommunications networks are now available in the literature.
\\ \indent In a  recent work~\cite{LeGal1904:Influence},   a new mathematical model for the so-called {\em device-to-device} wireless  networks in obstructive urban environments was proposed and studied numerically.
It is meant to represent direct wireless connections between users
(and possibly some relays) taking place in  an urban environment,
with limited range connections possible only  within line-of-sight along the streets. Such  obstructive conditions for signal propagation are sometimes called {\em urban canyon shadowing}.
In this paper, we study the percolation of this new  model on a more theoretical approach.

More precisely, we model the system of streets as the edges (i.e. one-dimensional facets or simply 1-facets) of the Poisson-Voronoi tessellation (PVT) in the $d$-dimensional Euclidean space.  These edges (which will also be called streets) form the random support of a Cox point process modelling users of the network, with a constant  density~$\lambda$ of users per unit length of  street. Moreover, the vertices of the PVT (i.e. zero-dimensional facets, which will also be called crossroads) are the support of a Bernoulli point process  modelling relays. These relays are assumed to be conditionally independent of the users given the realization of the PVT and
the (conditional) probability of having
a relay on a given PVT vertex is denoted by~$p$.
The superposition of these two point processes (users and relays), denoted by $Z$, defines the nodes of the connectivity graph denoted by~$\mathcal{G}$, with edges existing between any two nodes  
that  are located on the \emph{same} edge of the PVT and  closer to each other than some threshold distance $r$ called connectivity range.

This new percolation model can be seen as a superposition of some more basic models. Indeed, the Bernoulli process of the relays alone, with infinite connectivity range ($r=\infty$), corresponds to the {\em PVT site percolation model} that  has already attracted some attention in the literature, see e.g.~\cite{becker_percolation_2009,neher2008topological}. 
 On the other hand, taking all relays ($p=1$) with  a finite connectivity range
  ($r<\infty$) and no users on streets ($\lambda=0$) corresponds to some 
  PVT bond percolation model with an edge open if its length is smaller than~$r$, equivalently to the Gilbert graph~\cite{gilbert1961random}, considered here on the point process of the vertices of the PVT.  To the best of our knowledge, this model, which we call {\em PVT hard-geometric bond percolation}, has not been considered before.  Finally, adding users (taking $\lambda>0$) introduces  random, conditionally independent, openings of some arbitrarily long streets, where the opening probability depends (in some particular way) on the street length.
  This is similar to the classical random-connection model~\cite[Chapter~6]{meester_continuum_1996}, considered here on the point process of the vertices of the PVT. 
    Again, to the best of our knowledge, this model, which we call {\em PVT soft-geometric bond percolation}, has not been considered before. Both of these models (PVT hard-geometric and soft-geometric bond percolation) can be seen as random connection models in a line-of-sight environment, given the random support $S$.

Our main findings regarding the general connectivity graph~$\mathcal{G}$  can be summarized as follows:
\begin{itemize}
\item \textbf{0-1 law for the percolation probability of the connectivity graph~$\mathcal{G}$:} The probability that the connectivity graph~$\mathcal{G}$ percolates is either 0 or 1. This is a consequence of the fact that the superposition $Z$ of the point processes of users and relays is mixing, and hence ergodic.
    \item \textbf{Critical probability of  the Bernoulli relay process:} There exists a minimal value of  the parameter of the Bernoulli process $p^*\in(0,1)$ under which percolation of the connectivity graph~$\mathcal{G}$ cannot happen with positive probability, regardless of all other parameters. This is a consequence of the non-triviality of the PVT site percolation threshold. Although, it has  been estimated numerically many times in the literature (see e.g.~\cite{becker_percolation_2009,neher2008topological}), we were not able to find any proof of this fact in the literature. Note that the PVT site percolation model should not be confused with  the {\em Voronoi tiling percolation} model, which 
    consists in coloring each \emph{cell} of a PVT in black independently from all other cells with some fixed probability $p$ and investigating the random tiling of black cells.
    The percolation of this latter model is studied in~\cite{balister2005percolation} and the
     critical probability 
in the planar case  has been proven to be 1/2 in~\cite{bollobas2006critical}.
\item \textbf{Critical connectivity range:}
   For $p>p^*$, there exists a  critical connectivity range  $r^*=r^*(p)$, separating the following two 
    connectivity regimes:
    \begin{itemize}
     \item {\em permanently supercritical~$\mathcal{G}$:} For $r>r^*(p)$ the graph  $\mathcal{G}$ percolates with positive probability for all $\lambda\ge0$.
    \item {\em user critical~$\mathcal{G}$:} For $r<r^*(p)$ the graph $\mathcal{G}$ exhibits a non-trivial phase transition in $\lambda$, i.e. it  does not percolate for small $\lambda>0$ and percolates with positive probability for large enough  $\lambda<\infty$.
    \end{itemize}
    We prove that the  critical range $r^*(p)$,
    numerically  estimated in~\cite{LeGal1904:Influence}, is non trivial in the sense that $0<r^*(p)<\infty$ for $p>p^*$ large enough including some $p<1$.  
\item As a corollary  we obtain  the existence of a \textbf{non-trivial phase transition in the PVT hard-geometric bond percolation model}.
\end{itemize}

 \vskip \baselineskip \emph{The rest of this paper is organized as follows:} We begin with recalling some related works in Subsection~\ref{Ss.Relatedworks}. Then, we present in details our network model and introduce convenient notations in Section~\ref{S.Model}. In Subsection~\ref{S.Results}, we state our theoretical results in more detail. In Subsection~\ref{S.NumericalSimulations}, we present results of numerical simulations of our model in the planar case, so as to illustrate our main mathematical results. Then, we proceed with the proofs of our results in Section~\ref{s.Proofs}. Finally, we conclude and give perspectives for future work in Section~\ref{S.Conclusion}.
\subsection{Related works}
\label{Ss.Relatedworks}
In~\cite{gilbert1961random}, Gilbert introduced percolation in a continuum setting by considering a planar homogeneous Poisson point process where two points are joined by an edge if and only if they are separated by a distance gap less than a given threshold. 
This model has at the time been considered to be a good candidate for representing a telecommunications network, with the range of the stations being taken into account as a parameter. The Poisson case has now extensively been studied \cite{meester_continuum_1996} and Gilbert's model has recently been extended to other types of point processes, among which sub-Poisson \cite{blaszczyszyn2010connectivity,blaszczyszyn2013clustering,blaszczyszyn2015clustering}, Ginibre \cite{ghosh2016continuum} and Gibbsian \cite{jansen2016continuum,stucki2013continuum}. 
\\ \indent The study of percolation processes living in random environments has only been considered recently and outlined that many standard techniques from Bernoulli or continuum percolation cannot be applied in such cases. 
As a matter of fact, new tools and techniques had to be introduced. In this regard, the papers from Balister and Bollob\'as~\cite{balister2005percolation} and
Bollob\'as and Riordan \cite{bollobas2006critical} on the threshold of Voronoi tiling percolation in the plane are pioneering. Later on,~\cite{ahlberg2016quenched,tassion2016crossing} brought additional results concerning this model. Other percolation models~\cite{vahidi1990first}, tessellations~\cite{bollobas2008percolation} and other random graphs \cite{balister2008percolationknearest, balister2008percolation, beringer2017percolation} have also been considered. A more general study of Bernoulli and first-passage percolation on random tessellations has been conducted in \cite{ziesche2016bernoulli,ziesche2016first}. 
\\ \indent A natural extension of Gilbert's model in a random environment setting is obtained by considering a Cox point process, i.e. a Poisson point process with a random intensity measure. Percolation of Gilbert's model in such a setting has theoretically been studied for the first time in \cite{hirsch2018continuum}. 
\\ \indent In Gilbert's original model, connectivity between two network nodes only depends on their mutual Euclidean distance. This assumption has proven to be quite simplistic for the modelling of real telecommunications networks, where physical phenomena such as interference, fading or shadowing are at stake, making the occurrence of connectivity between two nodes depend on other factors. As a matter of fact, other extensions of Gilbert's work have been considered for a more accurate modelling of telecommunications networks. In particular, percolation of the signal-to-interference-plus-noise ratio (SINR) model on the plane has theoretically been studied in \cite{dousse2006percolation}. In the SINR model, a connection between a pair of points does not only depend on their relative distance anymore but also on the positions of all other nodes of the network. SINR percolation for Cox point processes has only been explored very recently \cite{tobias2018signal}. On a more applied perspective, random tessellations have turned out to yield good fits of real street systems, as has been proven in \cite{gloaguen2006fitting}. Percolation thresholds of the Gilbert graph of Cox processes supported by random tessellations have numerically been investigated in \cite{cali2018percolation}, yielding other interesting applications for telecommunications networks. 
\\ \indent Recently, mathematical models of so-called \emph{line-of-sight} (LOS) networks have been introduced, modelling telecommunications networks in environments with regular obstructions, such as large urban environments or indoor environments. Nodes of the network are then connected when they are sufficiently close 
and when they have line-of-sight access to one another, in other words if no physical obstacle stands between them. In \cite{frieze2009line}, asymptotically tight results on $k$-connectivity of the connectivity graphs arising from such models are studied. Bollob\'as, Janson and Riordan \cite{bollobas2009line} extended these results by introducing a line-of-sight site percolation model on the discrete square lattice $\mathbb{Z}^{2}$ and the two-dimensional $n$-torus $\left[n\right]\times \left[n\right] := \llbracket 1,n \rrbracket \times \llbracket 1,n \rrbracket$ and asymptotical results for the critical probability were derived as well. Interesting connections to Gilbert's continuum percolation model were also investigated. However, the study of line-of-sight percolation in a continuum setting with a random environment has not, 
as far as we know, been studied yet. \\ \indent It is in light of these recent developments that we introduced in our previous work~\cite{LeGal1904:Influence} a new percolation model (originally in the planar case) for Cox processes supported by Poisson-Voronoi tessellations (PVT).

\section{Model  and main results}
\label{S.Model}
\subsection{Connectivity graph}
\label{Ss.NetworkModel}

Let $d\geq 2$, $\lambda_{S} > 0$ and $X_{S}$ be a homogeneous Poisson point process (PPP) in the state space $\mathbb{R}^{d}$ with intensity $\lambda_S \in (0,\infty)$. Consider the Poisson-Voronoi tessellation (PVT) $S$ associated with $X_S$. In particular, $S$ is stationary and isotropic. By analogy with a telecommunications network, $S$ will be called \emph{street system} from now onwards. 

Denote by $E \coloneqq (e_{i})_{i \geq 1}$ the edge-set of $S$ and by $V \coloneqq (v_{i})_{i \geq 1}$ the vertex-set of $S$. In other words, the elements of $E$ are the \emph{1-facets} of $S$ and the elements of $V$ are the \emph{0-facets} of $S$.  Furthering the analogy with a telecommunications network, the elements of $E$ (respectively $V$) will be called \emph{streets} (respectively \emph{crossroads}). 

Let $\Lambda(dx) := \nu_{1}(S \cap dx)$, where $\nu_{1}$ denotes the 1-dimensional Hausdorff measure of $\mathbb{R}^{d}$.
Observe that $\Lambda$ is a stationary random measure  on  $\mathbb{R}^d$ ($\Lambda(B)$ is the total edge length of $S$ contained in any Borel set $B \subset \mathbb{R}^d$)
with finite non-null intensity $\gamma:=\mathbb{E}[\Lambda[0,1]^d] \in (0,\infty)$. Note that $\gamma$ also be interpreted as the total edge length per unit volume: for this reason, we call $\gamma$ the \emph{street intensity}. In the planar case $(d=2)$, it is known that $\gamma =  2\sqrt{\lambda_S}$ (see \cite[Section 9.7.2]{chiu_stochastic_2013}). More general formulae for $\gamma$ in any dimension $d \geq 2$ can be found in~\cite{moller_random_1989, moller_lectures_2012, muche2005poisson}.

For $\lambda>0$ consider a  Cox point process $X^{\lambda}$ driven by the random intensity measure $\lambda \Lambda$. 
In other words, conditioned on a given realization of the street system~$S$, $X^{\lambda}$ is a PPP with mean measure $\lambda \Lambda$. 
In accordance with the telecommunication interpretation of the PVT as a street system, the points of $X^{\lambda}$ model locations of users (equipped with mobile devices). In particular, the number of users on a given street  $e \in E$ is a Poisson random variable with mean $\lambda\nu_{1}(e)$ and the numbers of users on two disjoint subsets of $E$ are independent random variables. \\
\indent We consider yet another point process on the PVT, namely  a doubly stochastic Bernoulli point process $Y$ on the set of crossroads $V$ with parameter $p$: 
conditioned on $\Lambda$ (or, equivalently, the PVT $S$) points of $Y$ are placed on the crossroads $V$ of $S$ independently   with probability $p$.
The points of $Y$ will be called (fixed) relays. 
We also assume that the processes of users and of relays  are conditionally independent given their random support, i.e. $X^{\lambda} \indep Y \, \vert \,  \Lambda$. We denote $Z \coloneqq X^{\lambda} \cup Y$ the superposition of users and relays.

We consider an undirected {\em connectivity graph}  
$\mathcal{G}$ 
with the set of vertices given by the points of the point process  $Z$ and the  edges $Z_{i} \leftrightsquigarrow Z_{j}, i \neq j$, if and only if $Z_{i}$ and $Z_{j}$ are located on the \emph{same} street and of mutual Euclidean distance less than $r$:
\begin{equation}
\label{connectivity_mechanism}
\forall \, i \neq j, \: Z_{i} \leftrightsquigarrow Z_{j} \Leftrightarrow 
\left\{
\begin{array}{l}
\exists \, e \in E, \, Z_{i} \in e  \  \text{and} \  Z_{j} \in e \\
\lVert Z_{i} - Z_{j} \rVert \leq r.
\end{array}
\right. 
\end{equation}
Figure~\ref{fig:Network} illustrates a realisation of our network model and of its connectivity graph in the planar case ($d=2$).
\begin{figure}[t]
\centering
    \includegraphics[width=0.5\linewidth]{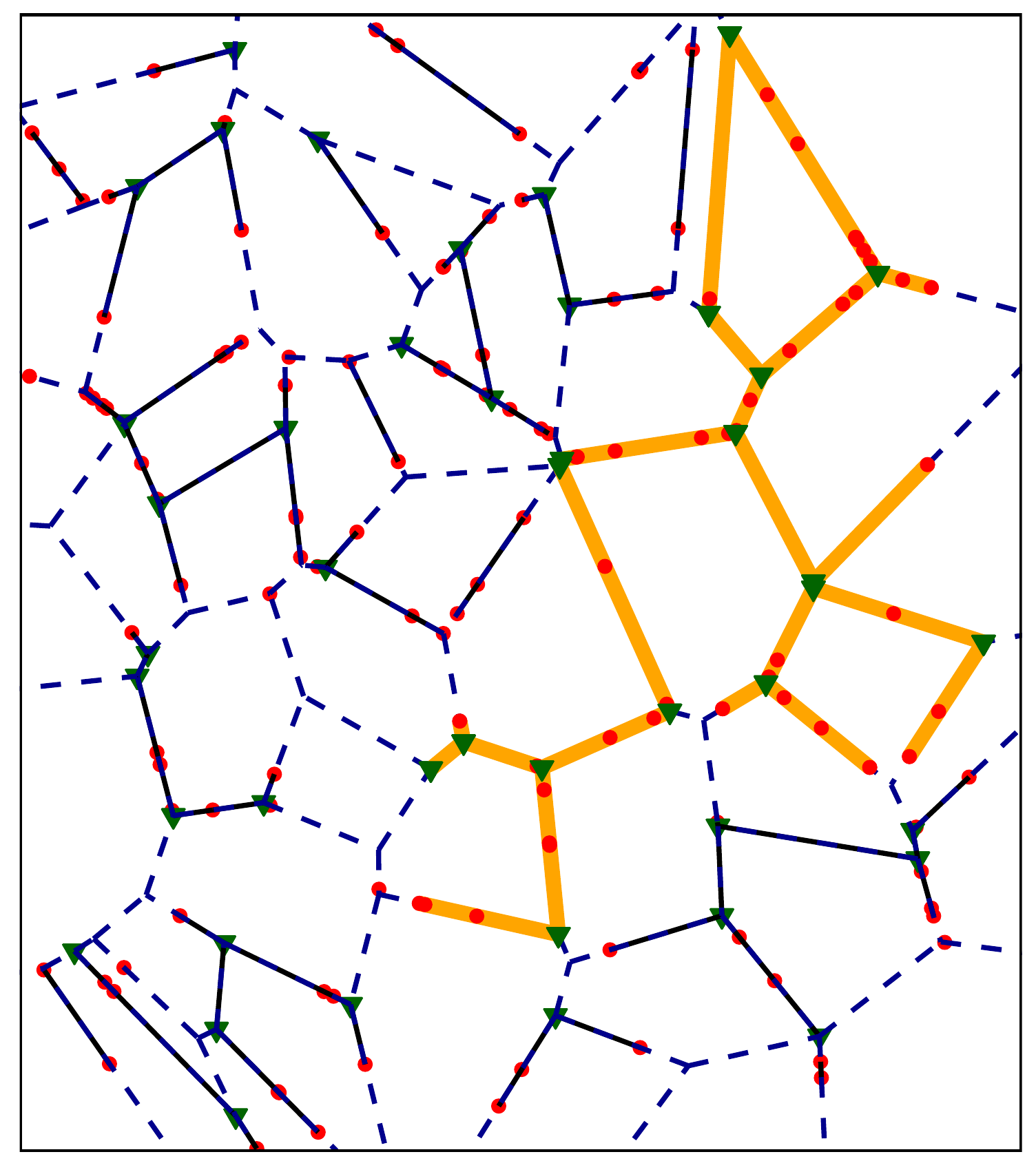}
    \caption{Illustration of the connectivity graph~$\mathcal{G}$ in a bounded simulation window of the plane $\mathbb{R}^2$. The dashed blue lines illustrate the PVT supporting the Cox point process of users represented by red circle-shaped points. The Bernoulli point process of relays is illustrated by the green triangle-shaped points. Finally, edges of the largest connected component of $\mathcal{G}$ in the simulation window are the thicker solid segments highlighted in orange, while edges of smaller connected components are the thinner solid segments highlighted in black. These connected components illustrate the connectivity mechanism given by~\eqref{connectivity_mechanism}.}
    \label{fig:Network}
\end{figure}

In this paper, we study the  percolation properties of the graph $\mathcal{G}$. More precisely,  we identify regimes (i.e. sets of  model parameters $p,\lambda,r$)   where  $\mathcal{G}$ percolates (i.e. has an infinite component) with positive probability.

\subsection{Dimensionless  model parameters}
\label{s.dimensionless-parameters}
While our original percolation model parameters are $\gamma, p,\lambda$ and $r$, it is customary to introduce the following dimensionless model parameters by proceeding as follows. Such parameters turn out to be way more convenient for numerical simulations, as they are also scale-invariant.  \\
\indent Denote by $\bar{l}$ the \emph{mean length of the typical street} (that is to say, the mean length of the typical point of the process of 1-facets of $S$, see~\cite{okabe_spatial_1992}). A general formula for $\bar{l}$ in any dimension $d\geq 2$ is available in~\cite{moller_random_1989, muche2005poisson}. In particular, there exists a positive constant $\kappa:=\kappa(d)$ such that $\bar{l}=\kappa(d)\lambda_S^{-1/d}$. Now, introduce the following dimensionless parameters:
\begin{equation*}
U: = \lambda \bar{l} \quad \text{and} \quad H:=\frac{\bar{l}}{r}.  
\end{equation*}
$U$ corresponds to the mean number of users per typical street, while $H$ is the mean number of hops (of length $r$) required to traverse the typical street.
It is easily shown that for all $d \geq 2$, $\bar{l}=\infty \Leftrightarrow \gamma = 0$ and $\bar{l}=0 \Leftrightarrow \gamma = \infty$. In what follows, we will thus denote by 
$\mathcal{G}_{p,U,H}$ the connectivity graph $\mathcal{G}$
as a function of the model parameters~$(p,U,H)$
in the domain $p \in \left[0;1\right],U\ge 0,H\ge 0$, with $H=0$ interpreted as
$r=\infty$ for some $0<\gamma<\infty$ and 
$U=0$ interpreted as
$\lambda=0$ for some $0<\gamma<\infty$.
Note that since $S$ is stationary and isotropic, percolation of the connectivity graph $\mathcal{G}$ does actually not depend on the parameter $\gamma$. If needed, we can thus fix a value $\gamma$ once and for all (e.g. $\gamma=1$), so that there is a one-to-one map $(p,\lambda,r) \mapsto (p,U,H)$ between the original and the dimensionless parameters.

\subsection{Results}
\label{S.Results}
Denote 
$$P(p,U,H):=\Probab(\mathcal{G}_{p,U,H}\ \text{percolates})$$
and observe $P$ is increasing in~$p$ and $U$ and decreasing in~$H$.

Our first result is an ergodicity result. More precisely, we have the following:

\begin{proposition}
\label{thm-ergodicity}
The superposition $Z$ of the point processes of users and relays is mixing, and hence ergodic.
\end{proposition}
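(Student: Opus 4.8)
The plan is to establish mixing of $Z$ through its joint Laplace functional, reducing everything to the mixing of the underlying street system $S$. Recall that a stationary point process is mixing as soon as, for every pair of nonnegative compactly supported measurable functions $f,g$ and with $\theta_x$ denoting translation by $x\in\R^d$, the two-point Laplace functional factorizes asymptotically, i.e.
\[
\mathbb{E}\!\left[e^{-Z(f)-(\theta_xZ)(g)}\right]\xrightarrow[\lvert x\rvert\to\infty]{}\mathbb{E}\!\left[e^{-Z(f)}\right]\mathbb{E}\!\left[e^{-Z(g)}\right],
\]
since Laplace functionals determine the law of a point process and the events $\{Z(f)\le t\}$ generate the relevant $\sigma$-algebra. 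Ergodicity is then immediate, as mixing implies ergodicity.

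First I would argue that the street system $S$ is itself mixing. The map $X_S\mapsto S$ is measurable and translation-covariant, and the homogeneous Poisson process $X_S$ is mixing (its restrictions to disjoint bounded windows are independent, so void-probability correlations factorize). Since any measurable, translation-covariant factor of a mixing system is again mixing, $S$ — and hence the pair $(\Lambda,V)$ of the edge-length measure and the vertex set — is mixing. Next I would condition on $S$. By construction $X^{\lambda}\indep Y\mid\Lambda$, with $X^{\lambda}$ conditionally Poisson of intensity $\lambda\Lambda$ and $Y$ a conditionally independent $p$-thinning of $V$, so the conditional Laplace functional of $Z=X^{\lambda}\cup Y$ factorizes into a user part and a relay part:
\[
F(S):=\mathbb{E}\!\left[e^{-Z(f)}\mid S\right]=\exp\!\Big(-\lambda\!\int\big(1-e^{-f}\big)\,d\Lambda\Big)\prod_{v\in V}\big(1-p(1-e^{-f(v)})\big),
\]
a bounded (valued in $[0,1]$), translation-covariant functional of $S$ depending only on the restriction of $S$ to the support of $f$.

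For $\lvert x\rvert$ large the supports of $f$ and $\theta_xg$ are disjoint; conditionally on $S$, the Poisson users and the Bernoulli relays falling in disjoint windows are independent (and the users are independent of the relays), so the conditional two-point functional factorizes as $F(S)\,G(\theta_{-x}S)$ with $G(S):=\mathbb{E}[e^{-Z(g)}\mid S]$. Taking expectations and invoking the mixing of $S$ applied to the bounded functionals $F,G\in L^2$, I would obtain
\[
\mathbb{E}\!\left[e^{-Z(f)-(\theta_xZ)(g)}\right]=\mathbb{E}\!\left[F(S)\,G(\theta_{-x}S)\right]\xrightarrow[\lvert x\rvert\to\infty]{}\mathbb{E}[F(S)]\,\mathbb{E}[G(S)]=\mathbb{E}\!\left[e^{-Z(f)}\right]\mathbb{E}\!\left[e^{-Z(g)}\right],
\]
which is exactly the desired factorization.

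The main obstacle I anticipate is the conditional-independence step, because the tessellation is a global object: a single $1$-facet of $S$ can be arbitrarily long and may simultaneously meet a window near the origin and a window near $x$. The point to make rigorous is that this global geometry does \emph{not} create dependence between the contributions $Z(f)$ and $(\theta_xZ)(g)$ once we condition on $S$: the only randomness beyond $S$ is the conditionally Poisson placement of users and the conditionally Bernoulli retention of relays, which are independent across the disjoint windows $\operatorname{supp}(f)$ and $\operatorname{supp}(\theta_xg)$ irrespective of how the edges joining them are shaped. All long-range correlation is therefore carried by $S$ alone and is dissipated by its mixing.
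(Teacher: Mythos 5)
Your proof is correct, but it takes a genuinely different route from the paper's. The paper never uses the mixing of the street system $S$ as such: it reduces to local events $A\in\sigma(Z\cap W_A)$, $B\in\sigma(Z\cap W_B)$ via \cite[Lemma 12.3.II]{daley2008introduction}, writes $\mathbb{P}(A\cap S_xB)=\mathbb{E}[f(\Lambda_{Q_n})g(\Lambda_{Q_n(-x)})]$ using the same conditional-independence observation you make, and then decouples the two factors by splitting on the event that the \emph{stabilization radii} of $\Lambda$ are small (conditions (2) and (3) of Definition~\ref{Def.stabilizing}), paying an $\epsilon$ for the bad event. You instead observe that $S$ is a measurable, translation-equivariant factor of the mixing Poisson process $X_S$, hence itself mixing, and then transfer this to $Z$ through the explicit conditional Laplace functional, which factorizes for well-separated test functions into $F(S)\,G(\theta_{-x}S)$ with $F,G$ bounded; mixing of $S$ applied to these functionals finishes the job. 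Both arguments hinge on the identical key point --- all long-range dependence of $Z$ is carried by the support, since users and relays are conditionally independent across disjoint windows given $\Lambda$ --- but they dissipate that dependence differently. Your hypothesis (the support is a factor of a mixing system) is in general weaker than stabilization, and your argument is shorter and avoids the $\epsilon/3$ bookkeeping; the paper's version stays entirely within the stabilization framework it needs anyway for the coarse-graining proofs, and its concluding remark generalizes to any \emph{stabilizing} random tessellation, whereas yours generalizes to any support that is a factor of a mixing point process. The only presentational gap is that the single-pair Laplace-functional criterion should strictly be invoked for finite families $\{Z(f_1),\dots,Z(f_k)\}$ (or cited as in \cite[Proposition 12.3.VI]{daley2008introduction}) to conclude mixing on the full $\sigma$-algebra; this is routine and does not affect the argument.
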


Since the percolation of the connectivity graph $\mathcal{G}$ is a translation-invariant event, a straightforward consequence of the previous result is the following 0-1 law:

\begin{corollary}
\begin{equation*}
    \forall \gamma >0, p \in \left[0,1\right], U \geq 0, H \geq 0,\quad P(p,U,H) \in \lbrace 0,1 \rbrace.
\end{equation*}
In other words, percolation of the connectivity graph $\mathcal{G}$ is either  an almost sure or almost impossible event.
\end{corollary}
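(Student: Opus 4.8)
The plan is to obtain the $0$–$1$ law as a direct consequence of the ergodicity of $Z$ established in the preceding proposition, via the classical principle that every translation-invariant event of an ergodic process has probability $0$ or $1$. Fix the parameters $\gamma,p,U,H$ (equivalently $\gamma,p,\lambda,r$) once and for all; all the randomness is carried by the jointly stationary environment consisting of the street system $S$ together with the superposed point process $Z=X^{\lambda}\cup Y$ living on it. For $t\in\mathbb{R}^{d}$ let $\theta_{t}$ denote the shift acting simultaneously on $S$ and on $Z$ by $x\mapsto x-t$. Because the construction of $S$ (a PVT from a homogeneous PPP) and of $Z$ given $S$ is stationary, the law of $(S,Z)$ is invariant under $\theta_{t}$, and the ergodicity from the proposition says precisely that the only $\theta$-invariant events have probability $0$ or $1$.

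The key step is then to verify that the percolation event
$$A:=\{\mathcal{G}_{p,U,H}\ \text{has an infinite connected component}\}$$
is measurable and $\theta$-invariant. Measurability follows by writing $A$ as a countable combination of events of the form ``there exist $n$ distinct points of $Z$ forming a connected path in $\mathcal{G}_{p,U,H}$ that escapes every bounded region'', each of which depends measurably on $(S,Z)$ through the connectivity rule~\eqref{connectivity_mechanism}. For invariance, I would observe that this rule is itself translation-covariant: two points $Z_{i},Z_{j}$ lie on a common street $e\in E$ and satisfy $\lVert Z_{i}-Z_{j}\rVert\le r$ if and only if the shifted points $Z_{i}-t,\,Z_{j}-t$ lie on the shifted street $e-t$ of $\theta_{t}S$ and satisfy the same distance bound. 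Hence $\theta_{t}$ carries $\mathcal{G}_{p,U,H}$ into the graph built from the shifted environment, existence of an infinite component is preserved under this isomorphism, and consequently $\theta_{t}^{-1}A=A$ for every $t\in\mathbb{R}^{d}$.

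I expect the only genuinely delicate point to be the bookkeeping around the probability space: the event $A$ does not depend on $Z$ alone but on the pair $(S,Z)$, since ``being on the same street'' cannot be read off from the locations of $Z$ without the environment. The argument therefore has to be phrased on the joint space of $(S,Z)$ (or, equivalently, on a marked version of $Z$ in which each point records its incident street), and one must check that the ergodicity of the proposition transfers to this enriched stationary object. This transfer is immediate once noted, because $S$ is a deterministic measurable functional of the driving process $X_{S}$ and $Z$ is constructed measurably on top of $S$, so the whole system inherits the mixing, hence the ergodicity, of the underlying construction. With the measurability and $\theta$-invariance of $A$ in hand, the ergodic $0$–$1$ law yields $\Probab(A)=P(p,U,H)\in\{0,1\}$, which is exactly the claim.
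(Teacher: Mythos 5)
Your proposal is correct and follows essentially the same route as the paper, which likewise deduces the $0$--$1$ law immediately from the ergodicity of Proposition~\ref{thm-ergodicity} together with the translation invariance of the percolation event. Your additional remark that the percolation event lives on the joint space of $(S,Z)$ rather than on $\sigma(Z)$ alone, and that the mixing argument (which conditions on $\Lambda$ and uses only its stabilization) transfers to this enriched object, is a legitimate refinement of a point the paper leaves implicit.
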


For given $p\ge0,H\ge0$ consider the following critical value of the mean number of users per street $U$  
\begin{align*}
U_c(p,H):=\inf\{U\ge 0: P(p,U,H)>0\}
\end{align*}
with $U_c(p,H):=\infty$ if $P(p,U,H)=0$ for all $U\ge 0$.

We aim at showing that there is a region  (i.e. a connected subset) of parameters $(p,H)$ such that $0<U_c(p,H)<\infty$. This is the region where the percolation of $\mathcal{G}$ exhibits non-trivial phase transition in the density of users. Existence of this region follows from the following two results.
\begin{theorem}[Existence of sub-critical intensities of users]
\label{Thm.analogue:subcritical}
For large enough \\ $H\in[0,\infty)$ and  small enough $U>0$ (with the thresholds for  $H$ and $U$ not depending on one another) we have  $P(1,U,H)=0$ and, consequently,  $P(p,U,H)=0$ for any $p\in[0,1]$.\end{theorem}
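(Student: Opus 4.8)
The plan is to use the monotonicity of $P$ recorded just after its definition. Since $P(p,U,H)$ is increasing in $p$, it suffices to treat the extremal case $p=1$, where every crossroad carries a relay; proving $P(1,U,H)=0$ then forces $P(p,U,H)=0$ for all $p\in[0,1]$. With $p=1$ I would first reduce the continuum percolation of $\mathcal{G}$ to a bond percolation model on the skeleton of $S$ (its vertex set $V$ and edge set $E$). Call a street $e=\{u,v\}\in E$ \emph{open} if its endpoints are joined through $e$ in $\mathcal{G}$, i.e. if the largest spacing between consecutive points of $Z$ on $e$ (the endpoints $u,v$ included) is at most $r$. Because two distinct $1$-facets of $S$ meet only at a common $0$-facet, a user on a street is $\mathcal{G}$-adjacent only to points of that same street, so any $\mathcal{G}$-path can switch streets exclusively at crossroads; hence an infinite $\mathcal{G}$-component contains infinitely many crossroads, and two consecutive crossroads visited by a $\mathcal{G}$-path are necessarily joined by an open street, while conversely open streets plus the present relays reproduce $\mathcal{G}$-connections. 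Thus $\mathcal{G}$ percolates if and only if this bond model percolates. The decisive gain is that, conditionally on $S$, the openness events of distinct streets depend on disjoint portions of $X^{\lambda}$ and are therefore \emph{independent}.

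Next I would bound the conditional open probability of a street $e$ of length $\ell=\nu_1(e)$. Short streets ($\ell\le r$) are open with probability one. For a long street ($\ell>r$), openness forces every gap to be $\le r$, which requires at least $\lceil \ell/r\rceil-1$ users on $e$; since their number is Poisson with mean $\lambda\ell$, the conditional open probability is at most $\Probab(\mathrm{Poisson}(\lambda\ell)\ge \lceil \ell/r\rceil-1)$. As $\lambda\ell$ is much smaller than $\ell/r$ precisely when $\lambda r = U/H$ is small, this is a large-deviation probability that is exponentially small in $\ell/r$, uniformly in $\ell$, whenever $\lambda r$ is small.

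The heart of the argument is a coarse-graining (renormalisation) scheme for this bond model, whose purpose is merely to exhibit \emph{one} pair $(U_0,H_0)$ with $U_0>0$, $H_0<\infty$ and $P(1,U_0,H_0)=0$: the full rectangle then follows for free, since $P$ is increasing in $U$ and decreasing in $H$, so $P(1,U,H)\le P(1,U_0,H_0)=0$ for every $U\le U_0$ and $H\ge H_0$, which is exactly the assertion that the thresholds for $U$ and $H$ may be chosen independently. To produce such a point I would tile $\mathbb{R}^d$ into congruent cubes and call a cube \emph{bad} if, within a bounded neighbourhood, it is crossed by an open connection; percolation of the bond model forces an infinite path of bad cubes, so it suffices that the bad-cube field be dominated by a subcritical Bernoulli field. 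The bad-cube probability splits into a \emph{hard-geometric} part from short streets and a \emph{soft-geometric} part from long open streets. For the hard part, the intensity of streets of length $\le r$ tends to $0$ as $r\to0$ (equivalently $H\to\infty$), so choosing $H_0$ large pushes the short-street crossing probability of a cube below half the comparison threshold; this also proves the subcritical half of the PVT hard-geometric bond model announced in the introduction. For the soft part, the deviation bound of the previous paragraph makes the extra bad-cube probability at most the prescribed remaining half once $U$ is small (and $H\ge H_0$, so $\lambda r=U/H$ is small), and choosing $U_0$ accordingly fixes the point.

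The main obstacle is to run the coarse-graining rigorously against the long-range dependencies of the environment: cell sizes and edge lengths of the PVT are correlated over arbitrarily large distances, so the bad-cube indicators are not finite-range functions of $X_S$. I would address this through a \emph{stabilisation} property of the PVT --- the configuration of $S$ inside a cube being determined, with overwhelming probability, by the points of $X_S$ in a random but exponentially well-localised neighbourhood --- together with tail estimates excluding pathological cubes (anomalously long streets, or anomalously many short ones). Stabilisation and these uniform exponential bounds then permit a Liggett--Schonmann--Stacey type domination of the bad-cube field by an independent Bernoulli field, whose subcriticality for $H\ge H_0$, $U\le U_0$ closes the argument. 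Verifying stabilisation and the attendant estimates for the PVT is the technically demanding step.
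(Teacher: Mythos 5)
Your proposal is correct, and its engine --- coarse-graining to a site process on $\mathbb{Z}^d$, stabilization of the PVT, and Liggett--Schonmann--Stacey domination by a subcritical Bernoulli field --- is exactly the paper's. The differences lie in the intermediate steps. You first reduce percolation of $\mathcal{G}$ at $p=1$ to a bond model on the skeleton $(V,E)$, a street being open iff its endpoints are joined through it; the paper skips this reduction and works directly with the \emph{segments} $e\cap Q_n(nz)$, declaring a site bad when some such segment is open in a purely local sense (length $\le r$, or every length-$r$ sub-segment carrying a user). That segment-level localization is the one device you should not gloss over: the event ``the cube is crossed by an open connection'' is not a function of the configuration in a bounded neighbourhood, since a long street meeting the cube may owe its openness to users far away; the fix is either the paper's segment criterion (openness of a whole street implies openness of each of its sub-segments, so the implication ``percolation $\Rightarrow$ infinite cluster of bad cubes'' survives) or a strengthened stabilization event forcing every street meeting $Q_n$ to lie inside a larger cube. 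Your Poisson large-deviation bound for long streets is sharper than needed --- the paper simply dominates the soft part by $\Probab(X^{\lambda}(Q_n)\ge 1)\le \lambda\gamma n^{d}\to 0$ as $\lambda\downarrow 0$ for fixed $n$ --- but it costs nothing. Finally, your decoupling of the two thresholds by invoking monotonicity of $P$ from a single subcritical point $(U_0,H_0)$ is a clean alternative to the paper's direct argument, in which $r$ is chosen to kill the short-street term and $\lambda$ the user term, each depending only on $n$ and not on the other.
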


\begin{theorem}[Existence of super-critical intensities of  users]
\label{Thm.analogue:supercritical}
For any $H\in[0,\infty)$, for 
large enough $p\in(0,1)$ and  $U<\infty$
(the thresholds for $p$ and $U$ depend on $H$ but not on one another)
we have $P(p,U,H)>0$. \end{theorem}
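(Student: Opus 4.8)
The plan is to establish percolation by a coarse-graining (renormalisation) argument comparing the connectivity graph $\mathcal{G}$ to a supercritical dependent site percolation on $\mathbb{Z}^d$, in the spirit of the Cox-percolation framework based on stabilisation and asymptotic essential connectedness \cite{hirsch2018continuum}. Fix $H\in[0,\infty)$, so that the connectivity range $r$ is a fixed positive constant (with the convention $r=\infty$ when $H=0$). Partition $\mathbb{R}^d$ into boxes $Q_z := a z + [0,a)^d$ for $z\in\mathbb{Z}^d$, where the side length $a$ will be chosen large, and I would declare a box $z$ to be \emph{good} when three events hold simultaneously: (i) the street system $S$, restricted to a bounded enlargement of $Q_z$, is \emph{stabilised} (its combinatorial and metric structure there is determined by the points of $X_S$ within a controlled radius) and its edge skeleton contains a connected cluster crossing $Q_z$ and meeting the analogous clusters of the $2d$ neighbouring boxes (an asymptotic-essential-connectedness condition); (ii) every crossroad of this crossing cluster carries a relay of $Y$; and (iii) along every edge of the crossing cluster the users of $X^{\lambda}$ together with the relays at the incident crossroads are spaced at mutual distance at most $r$, so that the whole crossing cluster is realised as a connected subgraph of $\mathcal{G}$.

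The point of (i)--(iii) is that, on the good event, adjacent good boxes share connected pieces of $\mathcal{G}$. I would first show that each condition has probability close to $1$ in the appropriate limit. Condition (i) is a statement about the PVT alone: I would invoke the stabilisation property of Poisson-Voronoi tessellations (the stabilisation radius has exponential tails) together with the almost-sure connectedness of the skeleton to show that the crossing-cluster event has probability tending to one as $a\to\infty$; truncating the stabilisation radius at a large fixed $R$ keeps this probability close to one while guaranteeing finite-range dependence. Conditioning on a realisation of $S$ satisfying (i), the number $M$ of crossroads and the total length of the crossing cluster inside the enlargement of $Q_z$ are bounded by a deterministic function of $a$ and $R$, so (ii) has conditional probability at least $p^{M}$, which tends to $1$ as $p\uparrow 1$; and (iii), for each edge of length at most $L$, fails with probability at most that of a gap exceeding $r$ in a $\mathrm{Poisson}(\lambda L)$ process, which tends to $0$ as $\lambda = U/\bar{l}\to\infty$. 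Because of the truncation at $R$, the good event of box $z$ is measurable with respect to $X_S$, $X^{\lambda}$ and $Y$ in a bounded neighbourhood of $Q_z$, so the family of good events is $k$-dependent for some finite $k=k(a,R)$.

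I would then choose parameters in order: first $a$ and $R$ large enough that (i) is very likely; then, with $H$ (hence $r$) fixed, $p$ close enough to $1$ that (ii) is very likely and, \emph{independently}, $U$ large enough that (iii) is very likely -- this is precisely the claimed decoupling of the thresholds for $p$ and $U$. By the Liggett-Schonmann-Stacey domination theorem for $k$-dependent fields, once $\mathbb{P}(z\text{ good})$ exceeds the relevant threshold the good boxes stochastically dominate a supercritical Bernoulli site percolation on $\mathbb{Z}^d$ and hence percolate. A path of good boxes yields, through (i)--(iii), a connected path in $\mathcal{G}$, so an infinite good cluster produces an infinite component of $\mathcal{G}$ and $P(p,U,H)>0$.

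The main obstacle is not the renormalisation machinery but the verification that the line-of-sight constraint is compatible with it, i.e.\ turning connectivity of the \emph{street skeleton} into connectivity of $\mathcal{G}$. Two geometric facts must be proven carefully: that the PVT skeleton is asymptotically essentially connected, so that crossing clusters of neighbouring good boxes actually merge (this is where the street environment genuinely differs from the independent Gilbert setting and where the stabilisation estimates for the Voronoi tessellation are needed), and that on a good box every junction of the crossing cluster is bridged -- a relay at a crossroad lies on all incident edges and, by (iii), is within distance $r$ of the nearest user on each of them, allowing the path to pass from one street to the next. The unboundedness of street lengths (a long edge may carry a large user-free gap even for large $U$) is confined to the tail of the stabilisation radius and controlled by the truncation at $R$, which is why the thresholds on $p$ and $U$ depend on $H$ through $a$, $R$ and $r$ but not on one another.
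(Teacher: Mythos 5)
Your proposal follows essentially the same route as the paper: a coarse-graining into good boxes defined by stabilization of the PVT, asymptotic essential connectedness of the street skeleton, relays present at all crossroads, and sufficient user density along the relevant edges, followed by a $k$-dependence argument and the Liggett--Schonmann--Stacey domination theorem, with the decoupling of the $p$ and $U$ thresholds coming, exactly as in the paper, from the fact that the relay condition involves only $Y$ while the user condition involves only $X^{\lambda}$. The one small slip is your claim that on the stabilization event the number $M$ of crossroads in the enlarged box is bounded by a deterministic function of $a$ and $R$ --- it is not, since the number of nearby Poisson points is unbounded --- but almost-sure finiteness of $M$ together with dominated convergence still gives $\mathbb{P}(\text{all crossroads carry a relay})\to 1$ as $p\uparrow 1$, which is precisely what the paper's argument uses.
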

The proofs are presented in Section~\ref{s.Proofs}.

\begin{remark} There are three and may be up to five different ranges of parameters $(p,H)$ of interest in our model. \\
The range of parameters $(p,H)$ where $0<U_c(p,H)<\infty$ (upper-right range schematically presented in orange on Figure~\ref{fig.Phase-transiton-diagram}) can be seen as the {\em critical range of $(p,H)$} in the sense that it separates the following two ranges of $(p,H)$: the
{\em (permanently) sub-critical range} (lower range schematically presented in blue on Figure~\ref{fig.Phase-transiton-diagram}), where $\mathcal{G}$ does not percolate whatever large the density of users ($U_c(p,H)=\infty$) and the {\em (permanently) super-critical range} (upper-left range schematically presented in red on Figure~\ref{fig.Phase-transiton-diagram}), where $\mathcal{G}$
percolates with positive probability, whatever small the density of users ($U_c(p,H)=0$). 
We cannot exclude that this latter range contains a non-empty subset of $(p,H)$ such that  $\mathcal{G}$ does not percolate without users ($U=0$) but percolates with positive probability for arbitrarily small density of users,
as depicted on Figure~\ref{fig.Phase-transiton-diagram-bis}.
Moreover, we do not know whether the permanently sub-critical range contains some $p>p^*$, as also depicted on Figure~\ref{fig.Phase-transiton-diagram-bis}. 
Note that we do not know the exact shapes of the curves separating these ranges except that they are monotonic. Even continuity is not known. 
\end{remark}

\begin{figure}[t]
\begin{center}
\begin{tikzpicture}[scale=1]
\begin{axis}[xmin=-0.001,xmax=1.15, ymin=0.45, ymax=1.001, samples=200, 
grid=major,
xtick={0,0.2,0.4,0.6,0.8,1.15},
xticklabels={0,0.2,0.4,0.6,0.8,$\infty$},
xlabel=$H$,
ylabel=$p$,
domain=0:1.15,
extra x ticks={1},
extra x tick style={ tick label style={xshift=0cm,yshift=.28cm, font=\scriptsize\boldmath}},
extra x tick label={{\color{black}{$\mathbin{\!/\mkern-3mu/\!}$}}},
extra y ticks={0.45},
extra y tick style={ tick label style={xshift=0.3cm,yshift=.21cm, rotate=315, font=\scriptsize\boldmath}},
extra y tick label={{\color{black}{$\mathbin{\!/\mkern-3mu/\!}$}}},
legend pos=south east,
style={font=\tiny}]
\addplot[red, thick, name path=pc] {0.71+(1.2-0.71)/(1+10^(4.5*(0.7-x)))}; 
 \addplot[color=red,mark=*, only marks] table
[x expr=\thisrow{H}*0.7/(656.58-136.51)-0.7/(656.58-136.51)*136.51+0, y expr=\thisrow{p}*(1-0.71)/(683.40-104.90)-(1-0.71)/(683.40-104.90)*104.90+0.71]
{pcH.dat};
 \addplot[blue, name path =ptwostar, thick, forget plot] (x, 0.71);
\addplot[color=black,mark=square, only marks] coordinates{(0.743,1)};
\addplot[color=blue,mark=*, only marks, forget plot] coordinates{(1.15,1)};
\addplot[color=blue,mark=o, only marks, forget plot] coordinates{(1.15,0.71)};
\addplot[color=black,mark=diamond, only marks] coordinates{(0,0.71)};
\node[color=black,opacity=1,text opacity=1] at (axis cs:0.3,0.85) {\fcolorbox{white}{white}{$U_c(p,H)=0$}};
\node[color=black,opacity=1,text opacity=1]  at (axis cs:0.89,0.87) {\fcolorbox{white}{white}
{$0<U_c(p,H)<\infty$}};
\node[color=black] at (axis cs:0.4,0.60) {\fcolorbox{white}{white}{$U_c(p,H)=\infty$}};
 \addlegendentry{$p_c(H)$}
\addlegendentry{$p_c(H)$ simulated}
 \addlegendentry{$H_c=H_0\approx0.743$}
 \addlegendentry{$p^*=p_c(0)\approx0.713$}
\addplot[orange!20, opacity=0.5,forget plot] fill between[of=pc and ptwostar];
\addplot[draw=none,forget plot, name path=xaxis] (x,0.45); 
\addplot[blue!20,opacity=0.5,forget plot] fill between[of=ptwostar and xaxis];
\addplot[draw=none,forget plot, name path=xaxisup] (x,1); 
\addplot[red!20,opacity=0.5,forget plot] fill between[of=xaxisup and pc];
  \end{axis}
\end{tikzpicture}
 \end{center}
\caption{\label{fig.Phase-transiton-diagram}
Conjectured phase transition diagram of $\mathcal{G}$. The existence of the range $0<U_c(p,H)<\infty$ follows from Theorems~\ref{Thm.analogue:subcritical} and~\ref{Thm.analogue:supercritical}. Estimated values for $H \mapsto p_c(H)$, $H_c$ and $p^*$ are obtained in the planar case via Monte-Carlo simulations, see Subsection~\ref{S.NumericalSimulations} and the earlier publication~\cite{LeGal1904:Influence}.}
\end{figure}
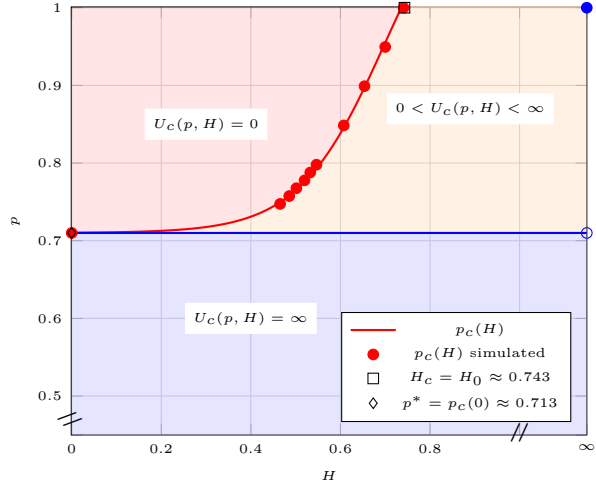
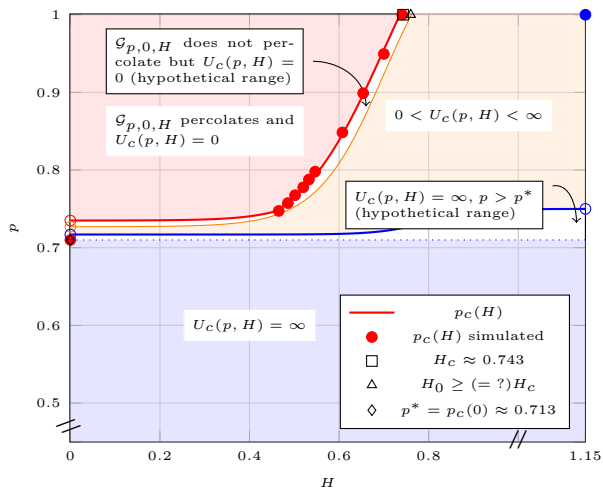
\begin{figure}[t]
\centerline{
\begin{tikzpicture}[scale=1]
\begin{axis}[xmin=-0.001,xmax=1.15, ymin=0.45, ymax=1.001, samples=200, 
grid=major,
xtick={0,0.2,0.4,0.6,0.8,1.15},
xlabel=$H$,
ylabel=$p$,
domain=0:1.15,
extra x ticks={1},
extra x tick style={ tick label style={xshift=0cm,yshift=.28cm, font=\scriptsize\boldmath}},
extra x tick label={{\color{black}{$\mathbin{\!/\mkern-3mu/\!}$}}},
extra y ticks={0.45},
extra y tick style={ tick label style={xshift=0.3cm,yshift=.21cm, rotate=315, font=\scriptsize\boldmath}},
extra y tick label={{\color{black}{$\mathbin{\!/\mkern-3mu/\!}$}}},
legend pos=south east,
style={font=\tiny}]
\addplot[red,  thick, name path=pc] {x<0.452?
 0.735+(1.26-0.735)/(1+10^(6*(0.73-x))):
0.71+(1.2-0.71)/(1+10^(4.5*(0.7-x)))}; 
\addplot[color=red,mark=*, only marks] table [x expr=\thisrow{H}*0.7/(656.58-136.51)-0.7/(656.58-136.51)*136.51+0, y expr=\thisrow{p}*(1-0.71)/(683.40-104.90)-(1-0.71)/(683.40-104.90)*104.90+0.71]{pcH.dat};
\addplot[color=red,mark=o, only marks, forget plot] coordinates{(0,0.735)};
\addplot[orange, name path =pstar, forget plot] {0.727+(1.12-0.727)/(1+10^(5.8*(0.7-x)))};
\addplot[color=orange,mark=o, only marks, forget plot] coordinates{(0,0.727)};
 \addplot[blue, name path =ptwostar, thick, forget plot] {0.717+(0.75-0.717)/(1+10^(8*(0.8-x)))};
 \addplot[color=blue,mark=o, only marks, forget plot] coordinates{(0,0.717)};
 \addplot[color=blue,mark=o, only marks, forget plot] coordinates{(1.15,0.75)};
 \addplot[color=blue,mark=*, only marks, forget plot] coordinates{(1.15,1)};
 \addplot[blue, dotted, name path =pthrestar, forget plot] (x,0.71);
\addplot[color=black,mark=square, only marks] coordinates{(0.743,1)};
 \addplot[color=black,mark=triangle, only marks] coordinates{(0.76,1)};
\addplot[color=black,mark=diamond, only marks] coordinates{(0,0.71)};
\node[color=black,opacity=1,text opacity=1] at (axis cs:0.3,0.85) {\fcolorbox{white}{white}{\parbox{10em}{$\mathcal{G}_{p,0,H}$ percolates and $U_c(p,H)=0$}}};
\node[color=black,opacity=1,text opacity=1] (source) at (axis cs:0.89,0.87) {\fcolorbox{white}{white}
{$0<U_c(p,H)<\infty$}};
\node[color=black,opacity=1,text opacity=1] (source) at (axis cs:0.835,0.755) {\fcolorbox{black}{white}
{\parbox{10em}{$U_c(p,H)=\infty$, $p>p^*$
(hypothetical range)}}};
\node (destination) at (axis cs:1.13,0.72){};
\draw[color=black,->](source) to [out=10,in=90] (destination);
\node[color=black, opacity=1,text opacity=1] (source) at (axis cs:0.3,0.94) {\fcolorbox{black}{white}{\parbox{10em}{$\mathcal{G}_{p,0,H}$ does not percolate but $U_c(p,H)=0$
(hypothetical range)}}};
\node (destination) at (axis cs:0.66,0.87){};
\draw[color=black,->](source) to [out=0,in=90] (destination);
\node[color=black] at (axis cs:0.4,0.60) {\fcolorbox{white}{white}{$U_c(p,H)=\infty$}};
 \addlegendentry{$p_c(H)$}
\addlegendentry{$p_c(H)$ simulated}
 \addlegendentry{$H_c\approx0.743$}
  \addlegendentry{$H_0\ge (=\text{?})H_c$}
 \addlegendentry{$p^*=p_c(0)\approx0.713$}
\addplot[orange!20, opacity=0.5,forget plot] fill between[of=pstar and ptwostar];
\addplot[draw=none,forget plot, name path=xaxis] (x,0.45); 
\addplot[blue!20,opacity=0.5,forget plot] fill between[of=pthrestar and xaxis];
\addplot[draw=none,forget plot, name path=xaxisup] (x,1); 
\addplot[red!20,opacity=0.5,forget plot] fill between[of=xaxisup and pc];
  \end{axis}
\end{tikzpicture}}
\caption{\label{fig.Phase-transiton-diagram-bis}
Phase transition diagram of $\mathcal{G}$ with hypothetical ranges of $(p,H)$. Estimated values for $H \mapsto p_c(H)$, $H_c$ and $p^*$ are obtained in the planar case via Monte-Carlo simulations, see Subsection~\ref{S.NumericalSimulations} and the earlier publication~\cite{LeGal1904:Influence}.}
\end{figure}

In what follows we discuss  some special cases of our percolation model.

\subsubsection{PVT site percolation}
\label{sss.PVT-site-percolation}
Note that for $H=0$ ($r=\infty$ for some $\gamma>0$),
$P(p,U,0)=:P_{PVT}(p)$ does not depend on $U$
and corresponds to the probability of the 
(i.i.d.) site percolation model on the (dimensionless) planar PVT.
Denote the critical parameter of this model by
\begin{equation*}
p^*:=\inf \lbrace {p}\in[0,1]:
P_{PVT}(p)>0 \rbrace.
\end{equation*}
\indent Clearly, by the monotonicity of the model $\mathcal{G}_{p,U,H}$ does not percolate for  $p<p^*$, whatever $U\ge0$, $H\ge0$. Moreover, as a consequence of Theorem~\ref{Thm.analogue:supercritical} and of standard percolation arguments, we obtain the non-triviality of the PVT site percolation threshold:

\begin{proposition}
\label{prop-non-triviality-PVT-site-threshold}
$p^* \in (0,1).$
\end{proposition}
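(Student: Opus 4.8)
The plan is to establish the two strict inequalities $p^*<1$ and $p^*>0$ separately. For the upper bound $p^*<1$ I would simply specialize Theorem~\ref{Thm.analogue:supercritical} to $H=0$: it provides some $p_0\in(0,1)$ together with some finite $U_0$ such that $P(p_0,U_0,0)>0$. Since, as recalled in Subsection~\ref{sss.PVT-site-percolation}, $P(p,U,0)=P_{PVT}(p)$ does not depend on $U$, this already gives $P_{PVT}(p_0)>0$; monotonicity of $P$ in $p$ then yields $P_{PVT}(p)>0$ for all $p\ge p_0$, whence $p^*\le p_0<1$.

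For the lower bound $p^*>0$ I would run a Peierls-type path-counting argument, exploiting the fact that the $1$-skeleton of the PVT is almost surely a graph of constant degree. Indeed, the Poisson generators are almost surely in general position (no $d+2$ of them lie on a common sphere), so every $0$-facet of $S$ is the circumcenter of exactly $d+1$ generators and is therefore incident to exactly $d+1$ edges; thus almost every realization of the street system induces a graph on $V$ in which every vertex has degree $d+1$. First I would condition on the PVT $S$ and work with this deterministic graph. Because the relay marks $Y$ are, conditionally on $S$, i.i.d.\ Bernoulli$(p)$ (and $X^{\lambda}\indep Y\mid\Lambda$, so users play no role here), the conditional expected number of open self-avoiding paths of length $n$ emanating from a fixed open vertex is at most $(d+1)d^{n-1}p^{n}$: there are at most $(d+1)d^{n-1}$ such paths in a graph of maximal degree $d+1$, and each requires its $n$ further vertices to carry a relay. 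Whenever $p<1/d$ this bound is summable in $n$.

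Finally I would convert finiteness of the typical cluster into the absence of global percolation. Working under the Palm distribution of the stationary point process $V$ of $0$-facets, the estimate above shows that the cluster of the typical vertex $o$ is finite almost surely as soon as $p<1/d$; since the quenched degree bound $d+1$ is uniform in the environment, the annealed statement follows at once. If $P_{PVT}(p)$ were positive it would equal $1$ by the $0$-$1$ law (the corollary following Proposition~\ref{thm-ergodicity}), so an infinite open cluster would exist almost surely; the open vertices lying in infinite clusters would then form a stationary point process of positive intensity, and Palm inversion would force $\Probab^{0}(o\ \text{lies in an infinite cluster})>0$, contradicting the almost-sure finiteness of the cluster of $o$. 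Hence $P_{PVT}(p)=0$ for $p<1/d$, giving $p^*\ge 1/d>0$ (which for $d=2$ reads $p^*\ge 1/2$, consistent with the numerical value $p^*\approx0.713$). I expect the main obstacle to be precisely this last passage: making rigorous that the constant-degree property holds for almost every realization and combining it cleanly with Palm calculus to exclude an infinite cluster, rather than merely controlling the cluster of a single deterministically chosen vertex.
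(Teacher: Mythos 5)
Your proposal is correct and follows essentially the same route as the paper: $p^*<1$ by specializing Theorem~\ref{Thm.analogue:supercritical} to $H=0$, and $p^*>0$ by a path-count argument using the almost sure degree $d+1$ of PVT vertices under the Palm distribution of the crossroad process, yielding the same bound $p^*\ge 1/d$. The only (inessential) difference is the final conversion step, where you invoke the 0-1 law and the positive intensity of the point process of vertices in infinite clusters, whereas the paper argues directly via Markov's inequality and the Campbell--Little--Mecke--Matthes theorem that the expected number of such vertices vanishes.
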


\subsubsection{PVT Hard-geometric bond percolation} 
For $U=0$ (no mobile users) 
$\mathcal{G}_{1,0,H}$ corresponds  to a  (non-standard) inhomogeneous bond percolation model on the PVT, in which the  edges of the PVT are open or closed depending whether their length is smaller or larger than some threshold.  We call it PVT hard-geometric bond percolation.
It seems that this model has not been studied in the 
literature.

Define the critical bond parameter of this model
\begin{equation*}
H_c \coloneqq \sup \lbrace {H} \geq 0: 
P(1,0,H)>0
\rbrace.
\end{equation*}
Note that $H_c \le H_0 \coloneqq \sup \lbrace H \geq 0: U_c(1,H)=0
\rbrace$ and, by Theorem~\ref{Thm.analogue:subcritical}, $H_0<\infty$. 
This observation, combined with the following result, ensures that there is a non-trivial phase transition in the PVT hard-geometric bond percolation model, as stated in the introduction:
\begin{theorem}[Existence of the  permanently super-critical range] \label{Thm.permanently-super-critical}
For large enough $p<1$ and small enough $H>0$ (with the thresholds for  $p$ and $H$ not depending on one another), we have that $P(p,0,H)>0$. \end{theorem}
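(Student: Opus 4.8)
The plan is to establish percolation by a coarse-graining (renormalisation) argument comparing $\mathcal{G}_{p,0,H}$ to a supercritical Bernoulli site percolation on $\mathbb{Z}^d$, reusing the stabilisation and asymptotic essential connectedness properties of the Poisson-Voronoi tessellation that underlie Theorem~\ref{Thm.analogue:supercritical}. The essential new point, compared with the case $U>0$, is that with no users on the streets the connected backbone produced inside each block must be built exclusively from \emph{open vertices} (crossroads carrying a relay) and \emph{open edges} (streets of length at most $r$); no arbitrarily long street can be exploited. The decoupling of the two thresholds will come precisely from the fact that controlling edge lengths is a property of the tessellation alone (tuned by $H$), whereas controlling vertex occupation is tuned by $p$. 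First I would fix a block scale $L>0$, tile $\mathbb{R}^d$ by cubes $C_z:=Lz+[-L/2,L/2)^d$ together with enlarged cubes $C_z^+:=Lz+[-3L/2,3L/2)^d$, $z\in\mathbb{Z}^d$, and call a block $z$ \emph{good} when, simultaneously: (i) $S$ restricted to $C_z^+$ contains a connected subgraph $B_z$ with vertices and edges in $C_z^+$, crossing $C_z$ so that the backbones of any two adjacent good blocks are connected to one another in the overlap $C_z^+\cap C_{z'}^+$, and all of whose edges have length at most a fixed threshold $M$; (ii) $r\ge M$, equivalently $H\le \bar{l}/M$, so every edge of $B_z$ is open; (iii) every vertex of $B_z$ carries a relay, so every vertex of $B_z$ is open. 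On the good event $B_z$ is then a genuine connected subgraph of $\mathcal{G}_{p,0,H}$, and adjacent good backbones are linked.

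The key estimate is that $\Probab(z\text{ good})$ can be pushed arbitrarily close to $1$ with the thresholds for $H$ and $p$ chosen \emph{independently}. The existence in (i) of a crossing backbone all of whose edges are bounded by $M$ is an event depending on the tessellation alone, with probability $q_1(L,M)$ not involving $p$ or $H$; using that long edges of the PVT in a bounded window are rare and that the PVT is asymptotically essentially connected at large scales, one shows $q_1(L,M)\to 1$ as $L,M\to\infty$. Having fixed such $L$ and $M$, condition (ii) holds deterministically as soon as $H\le \bar{l}/M$, a threshold depending only on $M$ (hence on $L$) and not on $p$. Finally, on the backbone event the number $N_z$ of backbone vertices is tightly controlled (it lives in the bounded region $C_z^+$ and has exponential tails), so condition (iii) holds with conditional probability at least $p^{N_z}$, which tends to $1$ as $p\to 1$ uniformly in $H$. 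Combining the three bounds, for any target $q<1$ I would first choose $L,M$ (hence a threshold $H\le\bar{l}/M$) and then $p$ close enough to $1$ so that $\Probab(z\text{ good})\ge q$, the two thresholds being manifestly decoupled as required by the statement.

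It then remains to convert goodness into percolation. By the stabilisation property of the PVT, the restriction of $S$ to $C_z^+$ is, up to a high-probability stabilisation event, determined by the points of $X_S$ in a bounded neighbourhood of $C_z^+$; together with the conditional independence $X^{\lambda}\indep Y\,\vert\,\Lambda$ of the relay marks this makes $\bigl(\mathbf{1}\{z\text{ good}\}\bigr)_{z\in\mathbb{Z}^d}$ a $k$-dependent random field for some finite $k=k(L)$. For $q$ close enough to $1$ the Liggett--Schonmann--Stacey domination theorem then shows that this field stochastically dominates a supercritical independent Bernoulli site percolation on $\mathbb{Z}^d$, so the good blocks percolate. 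Since adjacent good backbones are connected and consist entirely of open vertices and open edges, the union of the backbones over an infinite cluster of good blocks is an infinite connected component of $\mathcal{G}_{p,0,H}$, whence $P(p,0,H)>0$.

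The hard part will be the backbone construction in step (i): one must exhibit, with probability tending to $1$, a \emph{crossing} connected cluster of the tessellation inside each block whose edges are all short, and guarantee that adjacent such clusters necessarily meet in the overlap region. This is exactly the large-scale connectivity (asymptotic essential connectedness) of the PVT, and combining it with the quantitative tail bound on edge lengths — uniformly enough to keep $q_1(L,M)\to 1$ — is the delicate point; verifying the stabilisation estimate that yields the finite range of dependence of the good-block field is the second technical ingredient, though both are supplied by the framework already developed for Theorem~\ref{Thm.analogue:supercritical}.
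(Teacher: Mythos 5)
Your proposal is correct and follows essentially the same route as the paper: a coarse-grained block (site) process on $\mathbb{Z}^d$ whose goodness combines a stabilization event, the existence of short streets, full relay occupation of the crossroads in an enlarged block, and asymptotic essential connectedness to link adjacent blocks, followed by $k$-dependence and Liggett--Schonmann--Stacey domination. The only cosmetic difference is that you introduce an explicit intermediate edge-length cap $M$ and require $r\ge M$, whereas the paper sends $r\to\infty$ for a fixed block scale $n$ and bounds the probability that some street meeting $Q_{6n}$ exceeds $r$; the decoupling of the $p$ and $H$ thresholds is obtained identically in both arguments.
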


As a consequence of Theorem~\ref{Thm.permanently-super-critical} and the monotonicity of $P(p,0,H)$ with $p$, we immediately have the following result.

\begin{corollary}[Existence of a super-critical phase in the PVT hard-geometric bond percolation]
\label{Coroll.phase-transition-hard-geometric}
For small enough $H>0$, we have that $P(1,0,H)>0$. Therefore, $H_c >0$.
\end{corollary}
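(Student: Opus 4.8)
The plan is to derive this corollary directly from Theorem~\ref{Thm.permanently-super-critical} together with the monotonicity of $P(p,0,H)$ in~$p$, which was recorded just after the definition of $P(p,U,H)$. No additional percolation machinery is needed here: the substantive work lies entirely in Theorem~\ref{Thm.permanently-super-critical}, and for Corollary~\ref{Coroll.phase-transition-hard-geometric} we merely specialise that result and read off the consequence for $H_c$.

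First I would invoke Theorem~\ref{Thm.permanently-super-critical} to extract a threshold $p_0 \in (0,1)$ and a threshold $H_1 > 0$ such that $P(p,0,H) > 0$ for every $p \ge p_0$ and every $H \in (0, H_1]$. The crucial feature is that $p_0$ can be taken \emph{strictly below}~$1$, and that the two thresholds are chosen independently of one another, so that $H_1$ does not shrink as $p$ is increased toward~$1$.

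Next I would fix an arbitrary $H \in (0, H_1]$ and apply monotonicity. Since $P(\cdot, 0, H)$ is non-decreasing in its first argument and $1 \ge p_0$, we obtain $P(1,0,H) \ge P(p_0, 0, H) > 0$. This already yields the first assertion of the corollary, namely that $P(1,0,H) > 0$ for all sufficiently small $H > 0$ (concretely, for all $0 < H \le H_1$). The conclusion about $H_c$ then follows from its definition $H_c = \sup\{H \ge 0 : P(1,0,H) > 0\}$: the preceding step shows that the whole interval $(0, H_1]$ belongs to the set over which this supremum is taken, whence $H_c \ge H_1 > 0$.

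I do not anticipate any genuine obstacle, since this is a direct specialisation. The only point requiring care is the decoupling of the thresholds in Theorem~\ref{Thm.permanently-super-critical}: one must be sure that one may push $p$ all the way up to~$1$ while keeping $H$ fixed and small, which is precisely what the hypothesis ``the thresholds for $p$ and $H$ not depending on one another'' guarantees. Without that independence one could not conclude that the single value $p = 1$ lies above the threshold uniformly in $H \in (0, H_1]$.
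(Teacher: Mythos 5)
Your proposal is correct and follows exactly the paper's own route: the paper derives the corollary from Theorem~\ref{Thm.permanently-super-critical} combined with the monotonicity of $P(p,0,H)$ in $p$, which is precisely your argument. Your explicit attention to the independence of the two thresholds is a sound reading of the theorem's statement and is what makes the specialisation to $p=1$ legitimate.
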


\subsubsection{PVT soft-geometric bond percolation}
Considering  $U>0$ introduces to our model the possibility of opening some 
long edges, which are not open in the PVT hard-geometric bond percolation.
Note that this is equivalent to yet another bond percolation, 
in which the edges of the PVT are open  
independently with probabilities depending  on their lengths. We call it 
soft-geometric bond percolation. It seems that such a model has not been studied in the literature either.

\subsubsection{$\mathcal{G}$ as a superposition of three percolation models}
Note  that $\mathcal{G}_{p,U,H}$ is a superposition of  the three independent (given the PVT) percolation models:
site mode, hard-geometric bond model and soft-geometric bond model.
The reason we cannot exclude the hypothetical ranges of $(p,H)$ 
such that $U_c(p,H)=\infty$ for  $p>p^*$
(see Figure~\ref{fig.Phase-transiton-diagram-bis}) is that we do not know whether
for all $p>p^*$ the percolation of  $\mathcal{G}$ can be preserved 
when lowering the  distance threshold of the hard-geometric bond percolation
(increasing  $H$)  by increasing the probabilities of edge opening  (increasing $U$) in the soft-geometric percolation model. 
This is possible only for large enough $p$ (see Theorem~\ref{Thm.analogue:supercritical}).

\subsection{Numerical observations in the planar case}
\label{S.NumericalSimulations}
\begin{figure}[t!]
\centering
\includegraphics[width=0.5\textwidth]{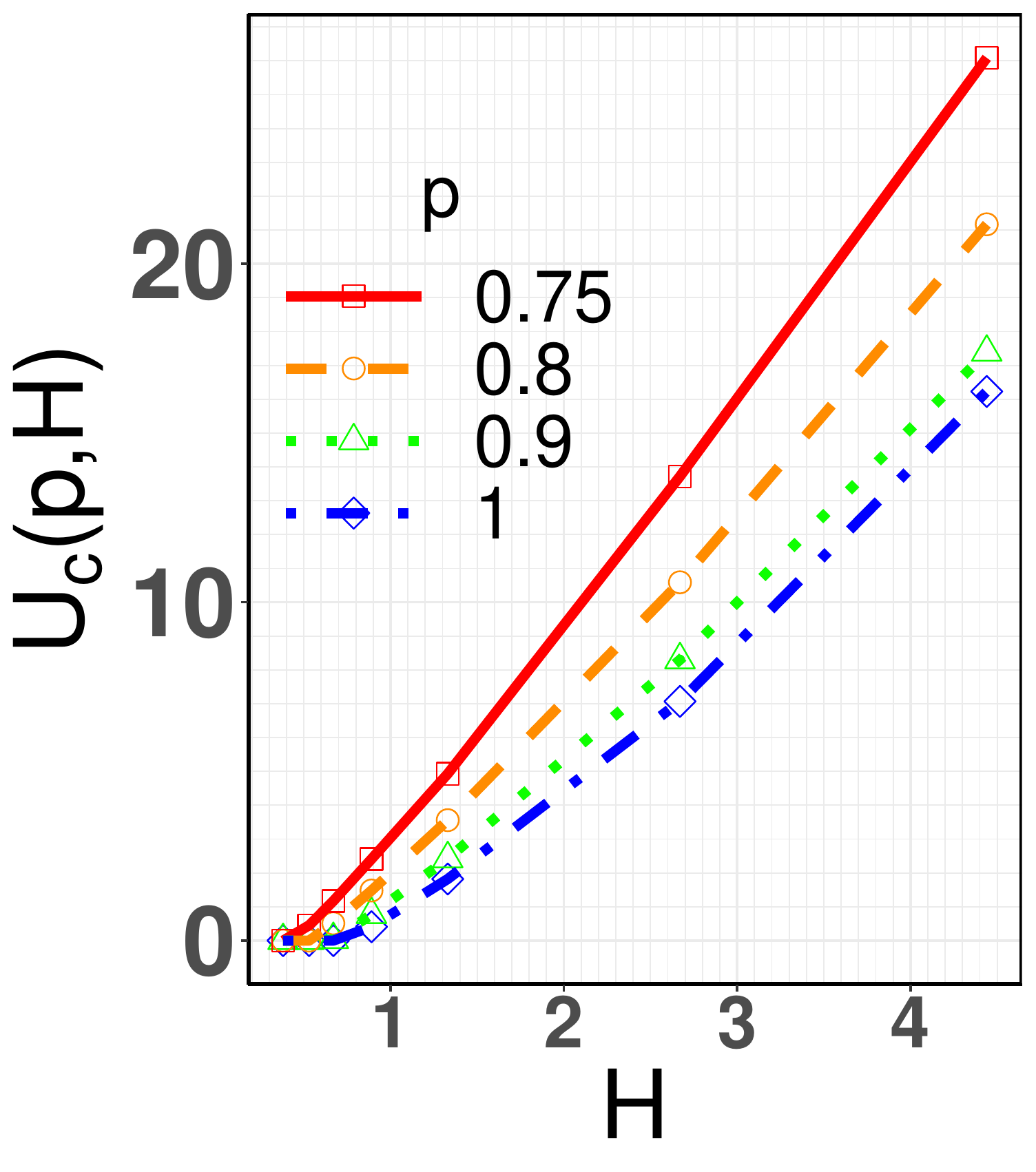}
\caption{\label{fig.Uc} Numerical estimations of $U_c(p,H)$ for some values of $(p,H)$.}
\end{figure}

In what follows, for the sake of visualisation, we briefly recall some numerical findings in the planar case (i.e. setting $d=2$) obtained  in an earlier publication~\cite{LeGal1904:Influence},
to which we refer the reader for further information regarding the simulation methodology. Note that in the planar case, it is known (see e.g.~\cite{okabe_spatial_1992}) that we have
\begin{equation*}
    \gamma = 2\sqrt{\lambda_S} \qquad ; \qquad \bar{l}=\frac{2}{3\sqrt{\lambda_S}}
\end{equation*}
and, as a consequence of the two previous formulae, we have $\kappa(2)=\frac{2}{3}$ and the length of the typical edge writes $\bar{l}=4/(3\gamma)$. \\

The estimates for the critical values $p^*$ and $H_c$  were found close to    $p^*\approx 0.713$ and $H_c\approx 0.743$. Recall that $H_c$ concerns a model which, as far as we know, has not been studied yet in the literature. Considering the estimate for $p^*$, our value only slightly differs from the most recent estimation available in the literature~\cite{becker_percolation_2009}: $p^* \approx 0.71410 \pm 0.00002$. While the authors providing this estimate proceeded with Monte-Carlo simulations with periodic boundary conditions and investigated the growth of the largest cluster, we chose a crossing-window method to obtain our estimate of $p^*$. 

For $H<H_c$ define 
\begin{equation*}
    p_c(H) \coloneqq \inf \lbrace p > 0 \, : \, P(p,0,H)> 0 \rbrace.
\end{equation*}
This is hence the lower boundary of the
strictly super-critical range of~$(p,H)$.
Some estimated values of the function~$p_c(H)$ are presented on Figure~\ref{fig.Phase-transiton-diagram}.

Finally, Figure~\ref{fig.Uc} presents some estimated values of  $U_c(p,H)$ for some selected parameters $(p,H)$ (mainly in the critical range).

\section{Proofs}
\label{s.Proofs}

\subsection{General approach}
Theorems~\ref{Thm.analogue:subcritical} to~\ref{Thm.permanently-super-critical} as well as Corollary~\ref{Coroll.phase-transition-hard-geometric} have been stated in terms of the dimensionless parameters $(p,U,H)$ introduced in Subsection~\ref{s.dimensionless-parameters}. This allowed us to introduce the different connectivity regimes and their frontiers in a more applied fashion, as illustrated by Figures~\ref{fig.Phase-transiton-diagram} and~\ref{fig.Phase-transiton-diagram-bis}. \\
\indent However, when working on the proofs of the aforementioned results, it will be much more convenient to come back to the original parameters $(\gamma, p, \lambda,r)$. This is mostly due to the fact that $H$, being inversely proportional to $r$, is less easy to work with when considering particular events related to connectivity in the network graph. Moreover, recalling that percolation of the connectivity graph $\mathcal{G}$ does not depend on the parameter $\gamma$, we can set $\gamma=1$ without loss of generality in what follows. Switching back to the original network parameters $(\gamma=1,p,\lambda,r)$, we will therefore now refer to the connectivity graph as $\mathcal{G}=\mathcal{G}_{p,\lambda,r}$  and prove the following equivalent formulations of Theorems~\ref{Thm.analogue:subcritical} to~\ref{Thm.permanently-super-critical}:

\begin{duplicate}[Reformulation of Theorem~\ref{Thm.analogue:subcritical}]
\label{reformulation-thm-subcritical}
For small enough $r \in \left(0,\infty\right]$ and small enough $\lambda>0$ (with the thresholds for $r$ and $\lambda$ not depending on one another), $\mathcal{G}_{1,\lambda,r}$ does not percolate and, consequently, $\mathcal{G}_{p,\lambda,r}$ does not percolate either for any $p \in \left[0,1\right]$.
\end{duplicate}

\begin{duplicate}[Reformulation of Theorem~\ref{Thm.analogue:supercritical}]
\label{reformulation-thm-supercritical}
For any $r \in \left(0,\infty\right]$, for large enough $p \in (0,1)$ and $\lambda < \infty$ (the thresholds for $p$ and $\lambda$ depend on $r$ but not on one another), $\mathcal{G}_{p,\lambda,r}$ percolates.
\end{duplicate}

\begin{duplicate}[Reformulation of Theorem~\ref{Thm.permanently-super-critical}]
\label{reformulation-thm-permanently-supercritical}
For large enough $p<1$ and large enough $r<\infty$ (with the thresholds for $p$ and $r$ not depending on one another), $\mathcal{G}_{p,0,r}$ percolates.
\end{duplicate}

The general idea of the proofs of the above reformulations will consist in using a \emph{coarse-graining argument}. This consists in proceeding as follows:
\begin{enumerate}
    \item Map the percolation of $\mathcal{G}$ to a discretized percolation process on a rescaled integer lattice.
    \item Depending on the needs, relate the percolation (or the absence of percolation) of $\mathcal{G}$ to the percolation (or absence of percolation) of the discretized process.
    \item Prove that the discretized process features only short-range dependencies. More precisely, we will refer to the notion of $k$-dependence, which will be introduced in due course, see Definition~\ref{def-k-dependence}.
    \item Apply a result from Liggett, Schonmann and Stacey (\cite[Theorem 0.0]{liggett_domination_1997}) to conclude that the discretized process is dominated by a Bernoulli percolation process and thereby draw conclusions on $\mathcal{G}$.
\end{enumerate}

\subsection{Preparation}
We begin with introducing a few notations and definitions that will be useful for the purposes of our developments. 

We will use the following convenient notation for the length  $\vert s \vert$  of a street segment $s$ (which is a connected, topologically closed
subset of some $e \in E$).
For $A \subset \mathbb{R}^{d}$ and $B \subset \mathbb{R}^{d}$, we denote as customary the Euclidean distance between $A$ and $B$ by:
 $$\text{dist}(A,B) \coloneqq \inf \lbrace \lVert x - y \rVert_{2} , \,  x \in A, \,  y \in B \rbrace.$$
For $x \in \mathbb{R}^{d}$, 
$a>0$ 
we denote by $Q_a(x) \coloneqq x + \left[-a/2,a/2\right]^{d}$ the $d$-dimensional cube of side $a$ centered at $x$. We note that this is exactly the definition of the closed ball $\mathscr{B}(x,a/2)$ with center $x$ and radius $a/2$ for the infinite norm of $\mathbb{R}^d$:
\begin{equation*}
    Q_a(x) = \lbrace y \in \mathbb{R}^d : \lVert y-x \rVert_{\infty} \leq a/2 \rbrace = \mathscr{B}(x,a/2).
\end{equation*}
For simplicity, whenever $a=n \in \mathbb{N} \setminus \lbrace 0 \rbrace$, we will write $Q_n$ to mean $Q_n(0)$. \\

\indent We denote by $\mathbf{M}$ the space of Borel measures on $\mathbb{R}^{d}$, equipped with the evaluation $\sigma$-algebra \cite[Section 13.1]{last2017lectures}, which is the smallest $\sigma$-algebra making the mappings $\mathbf{M} \ni \Xi \mapsto \Xi(B)$ measurable for all Borel sets $B \subset \mathbb{R}^{d}$. For a (possibly random) Borel measure $\mu$ on $\mathbb{R}^{d}$ and $A \subset \mathbb{R}^{d}$, we denote the restriction of $\mu$ to $A$ by $\mu_A(\cdot) \coloneqq \mu(A \cap \cdot)$. We also adapt the definition of the support of a measure as follows: let $\mu$ be a (possibly random) Borel measure on $\mathbb{R}^{2}$. The \emph{support} of $\mu$ is the following set:
$$\text{supp}(\mu) \coloneqq \lbrace x \in \mathbb{R}^{d} \, : \forall \epsilon > 0, \,  \mu(Q_{\epsilon}(x)) > 0 \rbrace.$$

We will also need the concepts of \emph{stabilization} and \emph{asymptotic essential connectedness}, both introduced in \cite{hirsch2018continuum} for investigating spatial dependencies of random measures. 

\begin{definition}
\label{Def.stabilizing}
\cite[Definition 2.3]{hirsch2018continuum}
A random measure $\Xi$ on $\mathbb{R}^{d}$ is called \emph{stabilizing} if there exists a random field of stabilization radii $R = \lbrace R_{x} \rbrace _{x \in \mathbb{R}^{d}}$  defined on the same probability space as $\Xi$
and $\Xi$-measurable, 
such that:
\begin{enumerate}[label = (\arabic*)]
\item $(\Xi,R)$ are jointly stationary,
\item $\displaystyle \lim_{n \uparrow \infty} \mathbb{P}\left(\sup _{y \in Q_{n} \cap \mathbb{Q}^{d}} R_{y} < n\right) = 1$,
\item for all $n \geq 1$, the random variables 
$$ \left\lbrace f\left(\Xi_{Q_{n}(x)}\right)\mathbbm{1}\left\lbrace \sup_{y \in Q_{n}(x) \cap \mathbb{Q}^{d}} R_{y} < n \right\rbrace \right\rbrace _{x \in \varphi}$$
are independent for all bounded measurable functions $$f : \mathbf{M} \to \left[0, +\infty \right)$$ and finite $\varphi \subset \mathbb{R}^{d}$ such that $\forall \, x \in \varphi, \, \text{dist}(x, \varphi \setminus \lbrace x \rbrace ) > 3n$.
\end{enumerate}
\end{definition}

\indent We slightly modify the definition of asymptotic essential connectedness given in~\cite{hirsch2018continuum} for the sake of simplicity and use the following definition:
\begin{definition}
\label{Def.eac}
Let $\Xi$ be a random measure on $\R^d$. Then $\Xi$ is \emph{asymptotically essentially  connected} if there exists a random field $R = \lbrace R_{x} \rbrace_{x \in \mathbb{R}^{d}}$ such that $\Xi$ is stabilizing with  $R$ as in Definition~\ref{Def.stabilizing} and if for all $n \geq 1$, whenever $\displaystyle \sup_{y \in Q_{2n} \cap \mathbb{Q}^{d}} R_{y} < n/2$, the following assertions are satisfied:
\begin{enumerate}[label = (\arabic*)]
\item $\text{supp}(\Xi_{Q_{n}}) \neq \emptyset$,
\item $\text{supp}(\Xi_{Q_{n}})$ is contained in a connected component of $\text{supp}(\Xi_{Q_{2n}})$.
\end{enumerate}
\end{definition}

The following result is stated in~\cite[Example 3.1]{hirsch2018continuum} for a slightly modified version of Definition~\ref{Def.eac}. It is easy to check that it adapts in our case as follows:

\begin{proposition}\label{prop.PVT-stabilizes}
Let $\Lambda = \nu_{1}(S \cap dx)$, where $S$ is the PVT generated by an homogeneous stationary Poisson point process. Then $\Lambda$ is stabilizing and asymptotically essentially connected with  the following stabilization field:
$$\forall x \in \mathbb{R}^{d}, \, R_x := \inf \lbrace \lVert x - X_{S,i} \rVert_2 , \, X_{S,i} \in X_{S} \rbrace,$$
where $X_S$ is the PPP  generating $S$.
\end{proposition}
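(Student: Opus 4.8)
The plan is to verify in turn the three conditions of Definition~\ref{Def.stabilizing} and the two conditions of Definition~\ref{Def.eac}, all resting on a single geometric principle: the portion of the tessellation $S$ inside a cube is a deterministic functional of the points of $X_S$ lying in a bounded enlargement of that cube, the size of the enlargement being controlled by the field $R$. Since both $\Lambda$ and $R$ are translation-covariant measurable functionals of the stationary process $X_S$, condition~(1) of Definition~\ref{Def.stabilizing} (joint stationarity of $(\Lambda,R)$) is immediate. For condition~(2), I would bound $\mathbb{P}(\sup_{y \in Q_n \cap \mathbb{Q}^d} R_y \geq n)$: the event $R_y \geq n$ forces $\mathscr{B}(y,n)$ to be empty of $X_S$, so covering $Q_n$ by $O(n^d)$ balls of a fixed radius $\delta$ and using a union bound with the void probability $\mathbb{P}(X_S \cap \mathscr{B}(\cdot,n-\delta)=\emptyset)=\exp(-c'(n-\delta)^d)$ gives a bound $O(n^d)\exp(-c'n^d)\to 0$, with considerable room to spare.

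The heart of the argument is condition~(3). The key lemma I would prove is a \emph{locality} statement: whenever $\sup_{y \in Q_n(x)\cap \mathbb{Q}^d} R_y < n$, any point of $X_S$ whose Voronoi cell meets $Q_n(x)$ lies within sup-norm distance $3n/2$ of $x$, hence in $Q_{3n}(x)$. Indeed, a point $z$ in the cell of a generator $p$ satisfies $\|z-p\|_2 = R_z < n$, so $\|p-x\|_\infty \leq \|p-z\|_\infty + \|z-x\|_\infty < n + n/2$; points of $X_S$ outside the enlargement are never nearest neighbours of any $z \in Q_n(x)$ and therefore do not affect the tessellation there. Consequently both $\Lambda_{Q_n(x)}$ and the indicator event $\{\sup_{y \in Q_n(x)\cap \mathbb{Q}^d} R_y < n\}$ are determined by $X_S$ restricted to a cube of side comparable to $n$ centered at $x$. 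Since $f(\Lambda_{Q_n(x)})\mathbbm{1}\{\cdots\}$ vanishes off that event, the whole product is a function of this restriction; for centers $x \in \varphi$ that are sufficiently separated these restrictions are supported on disjoint Borel sets, and the independence of $X_S$ on disjoint sets yields the required independence.

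For the asymptotic essential connectedness (Definition~\ref{Def.eac}), assume $\sup_{y \in Q_{2n}\cap \mathbb{Q}^d} R_y < n/2$. Nonemptiness of $\text{supp}(\Lambda_{Q_n})$ follows because $Q_n$ cannot lie inside a single cell: such a cell's generator would satisfy $\|z-p\|_2 = R_z < n/2$ for every $z \in Q_n$, forcing $Q_n \subseteq \mathscr{B}(p,n/2)$, which is impossible since $\text{diam}(Q_n)=n\sqrt{d} > n$; the same density bound forces a genuine $1$-facet to cross $Q_n$. For the connectedness claim, the bound $R_y < n/2$ forces every cell meeting $Q_n$ to have diameter less than $n$, hence to be contained in $Q_{2n}$; since the edge skeleton of a Voronoi tessellation is connected, the edge pieces inside $Q_n$ link up to one another through edges that remain inside $Q_{2n}$, placing $\text{supp}(\Lambda_{Q_n})$ in a single connected component of $\text{supp}(\Lambda_{Q_{2n}})$.

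The step I expect to be the main obstacle is the essential-connectedness statement in dimension $d \geq 3$, where the relevant facets are $1$-faces of $S$: one must argue both that the $1$-skeleton genuinely meets $Q_n$ and that it connects within $Q_{2n}$, which is cleaner in the plane where edges coincide with cell boundaries. A secondary, purely bookkeeping point is to reconcile the dimensional constant in the locality lemma with the separation threshold $3n$ of Definition~\ref{Def.stabilizing} — this is cleanest in the sup-norm, for which the enlargements $Q_{3n}(x)$ are disjoint once the centers are sup-norm separated by more than $3n$. All of this is geometric in nature, while the probabilistic content (the void-probability estimate and the disjoint-support independence) is standard and follows the template of~\cite[Example 3.1]{hirsch2018continuum}.
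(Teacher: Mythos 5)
First, a point of comparison: the paper does not actually prove Proposition~\ref{prop.PVT-stabilizes}; it cites \cite[Example 3.1]{hirsch2018continuum} and asserts that the verification ``adapts'' to the slightly modified Definition~\ref{Def.eac}. Your attempt therefore has to be measured against the standard argument rather than against a written proof in the paper. Your treatment of stabilization is essentially that standard argument and is sound: joint stationarity is immediate, the void-probability bound gives condition~(2), and your locality lemma (on the event $\sup_{y\in Q_n(x)\cap\mathbb{Q}^d}R_y<n$ every nearest generator of every $z\in Q_n(x)$ lies in $Q_{3n}(x)$, and the event itself is determined by $X_S\cap Q_{3n}(x)$), combined with the independence of the Poisson process $X_S$ on disjoint sets, gives condition~(3). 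The norm issue you flag is real --- the paper's $\text{dist}$ is Euclidean, and $\lVert x-x'\rVert_2>3n$ does not force the cubes $Q_{3n}(x)$ and $Q_{3n}(x')$ to be disjoint --- but since the paper only ever invokes condition~(3) for configurations separated by more than $3n$ in the sup-norm (see the proofs of Lemmas~\ref{Claim2.subcritical} and~\ref{Claim2.supercritical}), this is a defect of the definition as stated rather than of your reasoning.

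The genuine gap is in the asymptotic essential connectedness, and it is exactly the step you defer. Recall that $\Lambda=\nu_1(S\cap dx)$ is carried by the \emph{1-facets} of $S$, i.e.\ by the $1$-skeleton, not by the union of cell boundaries; these coincide only for $d=2$. Your non-emptiness argument shows only that $Q_n$ meets at least two cells, hence meets a $(d-1)$-facet --- for $d\ge3$ this produces no point of $\text{supp}(\Lambda_{Q_n})$. Likewise, ``the edge skeleton of a Voronoi tessellation is connected'' is precisely the assertion that must be localized: one has to show that the cells relevant to $Q_n$ are \emph{entirely} contained in $Q_{2n}$ (this needs a convexity argument and is sensitive to the constants of Definition~\ref{Def.eac}: a cell meeting $Q_n$ only has its generator $p$ within $\lVert p\rVert_\infty\le n$, and the containment $C_p\subseteq\mathscr{B}(p,n/2)$ is forced only for the points of $C_p$ already known to lie in $Q_{2n}$, so the bound does not close without further care), that each such bounded cell is a convex polytope whose graph of $0$- and $1$-faces is connected, and that adjacent cells share a $(d-1)$-face whose own edges belong to both $1$-skeletons, so that the union of the $1$-skeletons of this family is connected inside $Q_{2n}$ and contains $\text{supp}(\Lambda_{Q_n})$. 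None of this is supplied in your write-up; since it is the only content of the proposition going beyond \cite[Example 3.1]{hirsch2018continuum} (which is formulated for a different version of the definition), the proof as written is incomplete.
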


For simplicity, for $x \in \mathbb{R}^{d}$ and $n \in \mathbb{N}\setminus \lbrace 0 \rbrace$ we denote:
$$R(Q_n(x)) \coloneqq \sup_{y \in Q_n(x) \cap \mathbb{Q}^{d}} R_y.$$

Finally,  we define the openness and closedness of crossroads and street segments (possibly the whole streets themselves) as follows:

\begin{definition}[Open/Closed crossroad]
\label{Def.open/crossroad}
We say a crossroad $v \in V$ is \emph{open} if it is an atom of the point process $Y$, i.e. $Y(\lbrace v\rbrace) = 1$. We say $v$ is \emph{closed} if it is not open.
\end{definition}

\begin{definition}[Open/Closed street segment]
\label{Def.open/closed/subcritical}
Let $e \in E$ be a street and let $ s \subseteq e$ be a non-empty street segment. We say $s$ is \emph{open} if either of the two following set of conditions are satisfied:
\begin{enumerate}
\item $\vert s \vert \leq r$
\vspace{.2 cm}
\item[]\textbf{OR}
\vspace{.2 cm}
\item $\left\{
\begin{array}{l}
\vert s \vert > r \\
\forall c \subset s, \, (\vert c \vert = r \; , c \text{ connected and} \; c  \; \text{topologically closed} )\Rightarrow X^{\lambda}(c) \geq 1.
\end{array}
\right.$
\end{enumerate}
We say that $s$ is \emph{closed} if $s$ is not open. \\

\noindent We are now ready to proceed with the proofs of Theorems~\ref{thm-ergodicity}-\ref{Thm.permanently-super-critical}.
\end{definition}

\subsection{Proof of Proposition~\ref{thm-ergodicity}}
To prove that $Z$ is mixing, we will work on the canonical space and it will thus suffice to show that 
\begin{equation}
\label{eq-definition-mixing}
    \lim_{\lVert x \rVert_2 \rightarrow \infty} \mathbb{P}(A \cap S_xB) = \mathbb{P}(A)\mathbb{P}(B),
\end{equation}
for all events $A,B \in \sigma(Z)$ that are measurable with respect to the sigma-algebra $\sigma(Z)$ generated by $Z$ and where $\lbrace S_x \rbrace_{x \in \mathbb{R}^d}$ denotes the natural shift on $\mathbb{R}^d$. \\

\indent Note first that by~\cite[Lemma 12.3.II]{daley2008introduction} (and as has been done in the proof of \cite[Proposition 12.3.VI]{daley2008introduction}),  it suffices to check the mixing condition~\eqref{eq-definition-mixing} for \emph{local events}, i.e. of the form $A \in \sigma(Z \cap W_A)$ and $B \in \sigma(Z \cap W_B)$, where $W_A$ and $W_B$ are compact observation windows in $\mathbb{R}
^d$. Thus, let $A$ and $B$ be such events. We will show that for all $\epsilon >0$, we can find $x$ with $\lVert x \rVert_2$ sufficiently large so that $ \vert \mathbb{P}(A \cap S_xB) - \mathbb{P}(A)\mathbb{P}(B) \vert \leq \epsilon$. \\

\indent Take any $\epsilon >0$. By condition~(2) in the definition of stabilization (Definition~\ref{Def.stabilizing}), we can find sufficiently large $n \geq 1$ such that $\mathbb{P}(R(Q_n) \geq n)\leq \epsilon/3$. Moreover, such $n$ can be chosen so as to satisfy $W_A \subseteq Q_n$ and $W_B \subseteq Q_n$. Fix such $n$. \\
\indent Since $A \in \sigma(Z \cap W_A)$, $A$ only depends on the configuration of $Z$ inside $W_A$. In the same way, $B$ only depends on the configuration of $Z$ inside $W_B$, and $S_xB$ only depends on the configuration of $Z$ inside $S_xW_B = W_B -x$. Since $B \subseteq Q_n$, we have that $S_xW_B \subseteq Q_n -x =: Q_n(-x)$. Take $x$ with $\lVert x \rVert_2 > 6n\sqrt{2}$. Then we have $Q_n \cap Q_n(-x) = \emptyset$ and thus $W_A \cap S_xW_B = \emptyset$, so that the events $A$ and $S_xB$ depend on the configuration of $Z$ in disjoint sets. Since the conditional distribution of $Z$ given the random support $\Lambda$ is that of a superposition of a Bernoulli process and of a Poisson point process, the events $A$ and $S_xB$ are conditionnally independent given $\Lambda$. Hence:

\begin{equation}
    \label{eq-ergodicity-using-independence}
    \mathbb{P}(A \cap S_xB) = \mathbb{E} \Big[\mathbb{E}\left(\mathbbm{1} \lbrace A \rbrace  \mathbbm{1} \lbrace S_xB \rbrace \, \vert \, \Lambda \right) \Big] = \mathbb{E} \Big[\mathbb{E} (\mathbbm{1} \lbrace A \rbrace \, \vert \, \Lambda) \mathbb{E} \left(  \mathbbm{1} \lbrace S_xB \rbrace \, \vert \, \Lambda \right) \Big].
\end{equation}

Now, since $A \in \sigma(Z \cap W_A)$ with $W_A \subseteq Q_n$, we can write $\mathbb{E}(\mathbbm{1} \lbrace A \rbrace \, \vert \Lambda ) = f(\Lambda_{Q_n})$ as a bounded deterministic function of $\Lambda_{Q_n}$. In the same way, we can write $\mathbb{E}(\mathbbm{1} \lbrace S_xB \rbrace \, \vert \Lambda ) = g(\Lambda_{Q_n(-x)})$ as a bounded deterministic function of $\Lambda_{Q_n(-x)}$. By~\eqref{eq-ergodicity-using-independence}, we thus get:

\begin{align}
    \notag \mathbb{P}(A \cap S_x B) &= \mathbb{E}\left[ f(\Lambda_{Q_n})g(\Lambda_{Q_n(-x})\right] \\
    \label{eq-ergodicity-introducing-R}
    \begin{split}
       &=\mathbb{E}\left[ f(\Lambda_{Q_n})g(\Lambda_{Q_n(-x)}) \mathbbm{1} \lbrace R(Q_n) < n \rbrace \mathbbm{1} \lbrace R(Q_n(-x)) < n \rbrace \right] \\
       &+\mathbb{E}\left[ f(\Lambda_{Q_n})g(\Lambda_{Q_n(-x)}) \mathbbm{1} \lbrace (R(Q_n) \geq n) \cup ( R(Q_n(-x)) \geq n) \rbrace \right].
    \end{split}
\end{align}

Let us first deal with the second term appearing in the right-hand side of~\eqref{eq-ergodicity-introducing-R}. Using the fact that both $f$ and $g$, being conditional expectations of indicator functions, are upper-bounded by $1$, we get:

\begin{gather}
    \notag \mathbb{E}\left[ f(\Lambda_{Q_n})g(\Lambda_{Q_n(-x)}) \mathbbm{1} \lbrace (R(Q_n) \geq n) \cup ( R(Q_n(-x)) \geq n) \rbrace \right] \\
    \notag \leq \mathbb{P}\left[ (R(Q_n) \geq n) \cup ( R(Q_n(-x)) \geq n) \right] \\
    \label{eq-ergodicity-union-bound}
    \leq \mathbb{P}\left[R(Q_n) \geq n\right] + \mathbb{P}\left[R(Q_n(-x)) \geq n\right],
\end{gather}
where we have used the union bound in~\eqref{eq-ergodicity-union-bound}. Now, by stationarity of the stabilization field $\lbrace R_y \rbrace_{y \in \mathbb{R}^d}$, we get that the right-hand side in~\eqref{eq-ergodicity-union-bound} is equal to $2\mathbb{P}\left[R(Q_n) \geq n\right]$. In all, we thus get:

\begin{equation}
    \label{eq-ergodicity-dealing-with-second-term}
    \mathbb{E}\left[ f(\Lambda_{Q_n})g(\Lambda_{Q_n(-x}) \mathbbm{1} \lbrace (R(Q_n) \geq n) \cup ( R(Q_n(-x)) \geq n) \rbrace \right] \leq 2\epsilon/3.
\end{equation}

We now deal with the first term appearing in the right-hand side of~\eqref{eq-ergodicity-introducing-R}. Note that since $\lVert x \rVert_2 > 6n\sqrt{2}$, the set $\varphi := \lbrace 0,-x \rbrace \subset \mathbb{R}^d$ satisfies $\forall y \in \varphi, \text{dist}(y,\phi\setminus \lbrace y \rbrace) > 3n$ and so, by the condition (3) in the definition of stabilization, the random variables $f(\Lambda_{Q_n})\mathbbm{1}\lbrace R(Q_n) <n \rbrace$ and $g(\Lambda_{Q_n(-x)})\mathbbm{1}\lbrace R(Q_n(-x)) <n \rbrace$ are independent. Thus, the first term appearing in the right-hand side of~\eqref{eq-ergodicity-introducing-R} becomes:

\begin{equation}
\label{eq-ergodicity-using-stab-first-term}
\begin{gathered}
      \mathbb{E}\left[ f(\Lambda_{Q_n})g(\Lambda_{Q_n(-x)}) \mathbbm{1} \lbrace R(Q_n) < n \rbrace \mathbbm{1} \lbrace R(Q_n(-x)) < n \rbrace \right]  \\ = \mathbb{E}\left[ f(\Lambda_{Q_n}) \mathbbm{1} \lbrace R(Q_n) < n \rbrace\right]\mathbb{E}\left[ g(\Lambda_{Q_n(-x)})  \mathbbm{1} \lbrace R(Q_n(-x)) < n \rbrace \right]. 
\end{gathered}
\end{equation}

Now, using the fact that $f(\Lambda_{Q_n})=:\mathbb{E}\left( \mathbbm{1} \lbrace A \rbrace \, \vert \, \Lambda \right)$ and noting that the event $\lbrace R(Q_n) < n \rbrace$ is $\Lambda$-measurable, we can put everything back into a single expectation and get:

\begin{align*}
    \mathbb{E}\left[ f(\Lambda_{Q_n}) \mathbbm{1} \lbrace R(Q_n) < n \rbrace\right] &= \mathbb{E}\left[ \mathbb{E}\left( \mathbbm{1} \lbrace A \rbrace \, \vert \, \Lambda \right) \mathbbm{1} \lbrace R(Q_n) < n \rbrace\right]  \\ &= \mathbb{E}\left[ \mathbb{E}\left( \mathbbm{1} \lbrace A \rbrace \mathbbm{1} \lbrace R(Q_n) < n \rbrace \, \vert \, \Lambda \right)\right] \\
 &= \mathbb{E}\left[  \mathbbm{1} \lbrace A \rbrace \mathbbm{1} \lbrace R(Q_n) < n \rbrace \right]  \\ &= \mathbb{P}(A \cap \lbrace  R(Q_n) < n \rbrace ).
\end{align*}

In the same way, we get: 
$$\mathbb{E}\left[ g(\Lambda_{Q_n(-x)}) \mathbbm{1} \lbrace R(Q_n(-x)) < n \rbrace\right] = \mathbb{P}(S_xB \cap \lbrace  R(Q_n(-x)) < n \rbrace ).$$ Thus,~\eqref{eq-ergodicity-using-stab-first-term} yields:

\begin{equation*}
\begin{gathered}
      \mathbb{E}\left[ f(\Lambda_{Q_n})g(\Lambda_{Q_n(-x)}) \mathbbm{1} \lbrace R(Q_n) < n \rbrace \mathbbm{1} \lbrace R(Q_n(-x)) < n \rbrace \right]  \\ =\mathbb{P}(A \cap \lbrace  R(Q_n) < n \rbrace )\mathbb{P}(S_xB \cap \lbrace  R(Q_n(-x)) < n \rbrace ) \\
       =\mathbb{P}(A \cap \lbrace  R(Q_n) < n \rbrace )\mathbb{P}\circ S_x(B \cap \lbrace  R(Q_n) < n \rbrace ) \\ = \mathbb{P}(A \cap \lbrace  R(Q_n) < n \rbrace )\mathbb{P}(B \cap \lbrace  R(Q_n) < n \rbrace ),
\end{gathered}
\end{equation*}
where we have used the stationarity assumption to get the last line. Finally, using the fact that $\mathbb{P}(R(Q_n)<n)\geq 1-\epsilon/3$, we get 
$$\vert \mathbb{P}(A \cap \lbrace  R(Q_n) < n \rbrace )\mathbb{P}(B \cap \lbrace  R(Q_n) < n \rbrace ) - \mathbb{P}(A)\mathbb{P}(B) \vert \leq \epsilon/3$$
and thus 

\begin{equation}
    \label{eq-ergodicity-bounding-first-term}
     \vert \mathbb{E}\left[ f(\Lambda_{Q_n})g(\Lambda_{Q_n(-x)}) \mathbbm{1} \lbrace R(Q_n) < n \rbrace \mathbbm{1} \lbrace R(Q_n(-x)) < n \rbrace \right] - \mathbb{P}(A)\mathbb{P}(B) \vert \leq \epsilon/3.
\end{equation}

Using~\eqref{eq-ergodicity-dealing-with-second-term},~\eqref{eq-ergodicity-bounding-first-term} to put everything back together in~\eqref{eq-ergodicity-introducing-R} and using the triangular inequality, we finally get:

\begin{equation*}
    \lvert \mathbb{P}(A \cap S_xB) - \mathbb{P}(A)\mathbb{P}(B) \rvert \leq 2\epsilon/3 + \epsilon/3 = \epsilon
\end{equation*}
for sufficiently large $x$, as required. This concludes the proof of Proposition~\ref{thm-ergodicity}.

\begin{remark}
Note that we actually did not need to use the PVT structure nor the asymptotic essential connectedness of the random support $S$ here. We only used the fact that $S$ is a stabilizing random tessellation and the complete independence properties of the point processes of users $X^\lambda$ and of relays $Y$ given their random support $S$. As a matter of fact, Proposition~\ref{thm-ergodicity} can be generalized to any stabilizing random tessellation $S$ in $\mathbb{R}
^d$.
\end{remark}

\subsection{Proof of Theorem~\ref{Thm.analogue:subcritical}}
As mentioned earlier, proving Theorem~\ref{Thm.analogue:subcritical} is equivalent to proving Reformulation~\ref{reformulation-thm-subcritical}. This in turn is equivalent to showing that $\mathcal{G}$ does not percolate when $p=1$ and  $\lambda, r$  are sufficiently small but positive. 
We will use a coarse-graining argument and introduce a discrete  site percolation model on the integer lattice $\mathbb{Z}^d$ constructed in such a way that if it does not percolate, then neither does $\mathcal{G}$. Proving the absence of percolation of the integer lattice model will then be done via appealing to its local dependence.

To this end, for $n \geq 1$, say a site $z \in \mathbb{Z}^{d}$ is \emph{n-good} if the following conditions are satisfied: 
\begin{enumerate}[label = (\arabic*)]
\item $R(Q_n(nz)) < n$,
\item $\forall e \in E$ if  $s_{z,e}  \coloneqq e \cap Q_n(nz)\not=\emptyset$,  then $s_{z,e}$ is closed.
\end{enumerate}
Say a site $z \in \mathbb{Z}^{d}$ is \emph{$n$-bad} if it is not $n$-good.
\\

Our first claim is the following:
\begin{lemma}
\label{Claim1.subcritical}
Percolation of $\mathcal{G}$ implies percolation of the process of $n$-bad sites.
\end{lemma}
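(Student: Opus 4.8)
The plan is a geometric coarse-graining argument run in the contrapositive spirit. Assume $\mathcal{G}_{1,\lambda,r}$ percolates and fix an infinite connected component $C$. Since $Z$ is locally finite, $C$ has infinitely many vertices and is therefore unbounded; concatenating the straight street-segments underlying the edges $Z_i\leftrightsquigarrow Z_j$ of $C$ yields an unbounded continuous curve $\gamma$ whose image lies in the street system $S$. First I would record the soft fact that the family of cubes $Q_n(nz)$ met by $\gamma$ is an infinite $*$-connected subset of $\mathbb{Z}^d$, and the whole task is to extract from it an infinite \emph{connected} set of $n$-bad sites.

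The heart of the proof, and where I expect the real work to be, is the following no-crossing claim: an $n$-good cube cannot be crossed by $\mathcal{G}$, i.e. no connected piece of $C\cap\overline{Q_n(nz)}$ meets two distinct faces of $Q_n(nz)$. I would prove it by decomposing a putative crossing into maximal single-edge traversals joined at the crossroads where it switches streets. The decisive point, special to the PVT, is that the only vertices of $S$ lying on an edge $e$ are its two endpoints and that every user of $X^{\lambda}$ lies in the relative interior of a single edge; hence every junction of the crossing (a crossroad, or a point where the path meets $\partial Q_n(nz)$) sits at an endpoint of the edge currently used or on the cube boundary, so each maximal traversal along $e$ must run through its whole in-cube segment $s_{z,e}=e\cap Q_n(nz)$. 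But for a good cube every such $s_{z,e}$ is closed, i.e. $|s_{z,e}|>r$ and $s_{z,e}$ carries a connected window of length $r$ free of users; since hops have length at most $r$, no $\mathcal{G}$-traversal can bridge such a window, a contradiction. Note that only condition~(2) of $n$-goodness is used here — condition~(1) is reserved for the later rarity estimate — and $p=1$ enters only to allow switching streets freely at every crossroad, which makes the no-crossing statement strongest.

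To finish, I would run a first-passage argument: after choosing a coordinate direction in which $\gamma$ is unbounded, say $x_1$, the cube in column $j$ containing $\gamma$ just before its first crossing of the boundary $\{x_1=(j+1/2)n\}$ is entered through a face distinct from the one through which it exits, hence is crossed, hence by the claim is $n$-bad; letting $j\to\infty$ produces infinitely many bad cubes. Connectivity of these into a single infinite cluster then follows from the no-crossing property itself: a good cube cannot act as an essential bridge, since routing $C$ between two distinct neighbours through it would be a crossing, so deleting the good cubes met by $\gamma$ does not separate the bad ones, and $\gamma$ links consecutive crossed cubes through bad cubes only. The main obstacle is the no-crossing claim and its case analysis, together with the routine but delicate general-position bookkeeping — traversals entering within distance $r$ of a face, edges meeting $\partial Q_n(nz)$ tangentially, $\gamma$ passing through lower-dimensional boundary pieces, and the loop-erasure/non-separation step — needed to upgrade ``infinitely many bad cubes'' to ``an infinite connected cluster of $n$-bad sites'', which is exactly percolation of the process of $n$-bad sites.
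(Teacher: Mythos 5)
Your proposal reaches the right conclusion but by a genuinely different, and considerably heavier, route than the paper. The paper's proof is essentially three lines: for an unbounded component $\mathcal{C}$ it sets $\mathcal{Z}:=\lbrace z\in\mathbb{Z}^d:\mathcal{C}\cap Q_n(nz)\neq\emptyset\rbrace$, asserts that every $z\in\mathcal{Z}$ is $n$-bad (a touched cube is claimed to violate condition~(2) of $n$-goodness outright), and observes that $\mathcal{Z}$ is a.s.\ an infinite $\ell_1$-connected subset of $\mathbb{Z}^d$ because the street system a.s.\ avoids the corners of the rescaled cube partition. You instead prove only the weaker statement that a good cube cannot be \emph{crossed}, and then pay for it with the first-passage and non-separation machinery needed to recover an infinite connected set of bad sites. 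Your extra caution is not wasted: ``touched implies bad'' does not follow literally from Definition~\ref{Def.open/closed/subcritical}, since a segment $s_{z,e}$ possessing an $r$-gap is closed even if it carries users near $\partial Q_n(nz)$ that hook into $\mathcal{C}$ along the part of $e$ lying outside the cube; the crossing (or full-traversal) statement is the one that genuinely holds, and your decomposition into maximal single-edge runs terminating at endpoints of the current edge or at $\partial Q_n(nz)$ is the right way to prove it. So the paper's argument buys brevity at the price of a step that is stated too strongly, while yours buys correctness of the key implication at the price of a longer chain.

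The one genuine gap in your write-up is the final upgrade from ``infinitely many crossed, hence bad, cubes'' to ``an infinite connected cluster of $n$-bad sites'', which you flag but do not carry out, and whose sketch (``deleting the good cubes does not separate the bad ones'') is too loose to check. The clean version is the excursion argument you hint at: each excursion of the cluster's trace into a good cube must exit through the face by which it entered (otherwise that excursion is a crossing), hence returns to the same neighbouring cube; erasing the good cubes from the sequence of cubes visited by $\gamma$ therefore leaves a sequence in which consecutive cubes are still equal or $\ell_1$-adjacent, and this sequence is infinite by your column argument. You also need, for $d\ge 3$, that the streets a.s.\ avoid the $(d-2)$-skeleton of the cube partition (not merely the lattice of corners), so that the visited cubes really form an $\ell_1$-path rather than only a $*$-connected set. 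With those two points written out your argument closes; as it stands, the connectivity step is the missing piece.
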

\begin{proof}
Assume $\mathcal{G}$ percolates and denote by $\mathcal{C}$ an  unbounded (connected) component of $\mathcal{G}$.
 Denote $\mathcal{Z}=\mathcal{Z}_n:=\lbrace z\in \mathbb{Z}^d: \mathcal{C} \cap Q_n(nz) \neq \emptyset \rbrace$.  Since $\mathcal{C}$ is unbounded we have $\#(\mathcal{Z})=\infty$.
 Observe for all $z\in\mathcal{Z}$, $z$ is $n$-bad since  condition~(2) of $n$-goodness is not satisfied (there exists an open street segment intersecting~$Q_n(nz)$). 
 Also, $\mathcal{Z}$ is almost surely connected in $\mathbb{Z}^d$ (in the following sense: $z,z'\in\mathbb{Z}^d$, $z\not=z'$ are connected in~$\mathbb{Z}^d$ if $\lVert z-z' \rVert_{1}=1$).
This follows from the fact that the probability that some edge $e\in E$ of the PVT intersects $\mathbb{Z}^d$  is equal to zero (which is actually also true for the Voronoi tessellation generated by any stationary point process, as a consequence of the fact that such a process does not have points which are equidistant to a given, fixed
location, see e.g.~\cite[Lemma 11.2.3]{blaszczyszyn2017lecture}).
Hence, the process of $n$-bad sites percolates. 
\end{proof}

By Lemma~\ref{Claim1.subcritical}, it suffices to prove that the process of $n$-bad sites does not percolate (for some $n$) when  $\lambda$ and $r$ are sufficiently small but positive.
This will be done using the fact that it is a $3$-dependent  percolation model
on the integer lattice $\mathbb{Z}^d$.
\begin{definition}
\label{def-k-dependence}
Let $ \mathbf{X}=(X_z)_{z \in \mathbb{Z}^{d}}$ be a discrete random field. Let $k \geq 1$. Then $\mathbf{X}$ is said to be $k$-dependent if for all $q \geq 1$ and all $ \lbrace s_{1}, \ldots s_{q} \rbrace \subset \mathbb{Z}^{d}$ finite with the property that $\forall i \neq j, \lVert s_{i} - s_{j} \rVert_{\infty} > k $, the random variables $(X_{s_{i}})_{1 \leq i \leq q}$ are independent.
\end{definition}

As previously stated, we have the following:

\begin{lemma}
\label{Claim2.subcritical}
For $z \in \mathbb{Z}^{d}$, set $\zeta_{z} \coloneqq \mathbbm{1}\lbrace z \, \text{is $n$-bad} \rbrace$. Then $(\zeta_{z})_{z \in \mathbb{Z}^{d}}$ is a $3$-dependent random field.
\end{lemma}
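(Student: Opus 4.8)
The plan is to show that the $n$-badness of a site $z$ depends only on the configuration of the underlying processes in a bounded neighbourhood of the cube $Q_n(nz)$, and then to exploit the independence properties built into the stabilization field together with the conditional independence of the users and relays given the support. First I would unpack the two conditions defining $n$-goodness. Condition~(1) is the event $\{R(Q_n(nz)) < n\}$, which by definition of $R(Q_n(nz))$ as a supremum over $Q_n(nz) \cap \mathbb{Q}^d$ is a function of the stabilization field restricted to that cube. Condition~(2) concerns the closedness of every street segment $s_{z,e} = e \cap Q_n(nz)$; by Definition~\ref{Def.open/closed/subcritical}, closedness of such a segment is determined by its length (a function of $S$, hence of $\Lambda$, inside $Q_n(nz)$) and, when $|s_{z,e}| > r$, by whether the user process $X^\lambda$ leaves some length-$r$ subsegment empty. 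Since $r$ is small and the segments live inside $Q_n(nz)$, the relevant users lie in $Q_n(nz)$ as well. Thus $\zeta_z$ is measurable with respect to the restriction of $(\Lambda, R, X^\lambda)$ to $Q_n(nz)$.

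The key step is then to invoke condition~(3) of the stabilization definition (Definition~\ref{Def.stabilizing}) for the random measure $\Lambda$, which by Proposition~\ref{prop.PVT-stabilizes} applies to our PVT-driven support. Take any finite set of sites $\{s_1, \dots, s_q\} \subset \mathbb{Z}^d$ with $\lVert s_i - s_j \rVert_\infty > 3$ for $i \neq j$. Set $\varphi := \{n s_1, \dots, n s_q\}$. The separation $\lVert s_i - s_j \rVert_\infty > 3$ rescales to $\operatorname{dist}(n s_i, n s_j) \geq \lVert n s_i - n s_j \rVert_\infty > 3n$, so $\varphi$ satisfies the geometric separation hypothesis required in condition~(3). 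That condition states that the variables $f(\Lambda_{Q_n(x)}) \mathbbm{1}\{R(Q_n(x)) < n\}$, indexed by $x \in \varphi$, are independent for every bounded measurable $f$. I would observe that each $\zeta_{s_i}$ can be written in exactly this truncated form: on the event $\{R(Q_n(ns_i)) < n\}$ the value of $\zeta_{s_i}$ is a measurable function of the configuration in $Q_n(ns_i)$, and off this event $\zeta_{s_i} = 1$ deterministically (since condition~(1) fails, the site is automatically $n$-bad). The slight subtlety is that condition~(3) is phrased for functions of $\Lambda_{Q_n(x)}$ alone, whereas $\zeta_{s_i}$ also depends on $X^\lambda$; here I would use the conditional independence $X^\lambda \indep Y \mid \Lambda$ together with the complete independence of the Poisson process $X^\lambda$ over disjoint regions, exactly as in the proof of Proposition~\ref{thm-ergodicity}, to enlarge the independence from $\Lambda$-functionals to functionals of the full truncated configuration $(\Lambda, X^\lambda)_{Q_n(ns_i)}$.

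The main obstacle I anticipate is handling this extension cleanly, i.e.\ reconciling the fact that condition~(3) of Definition~\ref{Def.stabilizing} is literally stated only for functions $f$ of the measure $\Lambda$, while $n$-badness also reads off the Poisson users $X^\lambda$ living on the streets inside the cube. The honest way to close this gap is to condition on $\Lambda$: given $\Lambda$, the restrictions $X^\lambda_{Q_n(ns_i)}$ are independent across the disjoint cubes $Q_n(ns_i)$ by the complete independence of the Cox (Poisson) process, and the indicators $\mathbbm{1}\{R(Q_n(ns_i)) < n\}$ together with the functionals of $\Lambda_{Q_n(ns_i)}$ are independent across $i$ by condition~(3). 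Combining these two layers of independence yields the joint independence of the $\zeta_{s_i}$. I would therefore structure the proof so that the cubes $Q_n(ns_i)$ are first verified to be pairwise disjoint (which already follows from $\lVert s_i - s_j \rVert_\infty > 1$, a fortiori from $>3$), and then assemble the argument in the order: (i) disjointness of cubes, (ii) measurability of the truncated $\zeta_{s_i}$ with respect to $(\Lambda, X^\lambda)_{Q_n(ns_i)}$, (iii) independence via condition~(3) and conditional Poisson independence. The choice of the constant $3$ in the definition of $3$-dependence is dictated precisely by the factor $3n$ appearing in the separation hypothesis of condition~(3), which is why the rescaling by $n$ turns the $\lVert \cdot \rVert_\infty > 3$ separation of lattice sites into the $> 3n$ Euclidean separation required to apply stabilization.
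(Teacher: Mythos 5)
Your proposal is correct and follows essentially the same route as the paper: condition on $\Lambda$, factor the conditional expectation of the closedness events across the disjoint cubes $Q_n(nz_i)$ using the conditional Poisson independence of $X^\lambda$, and then apply condition~(3) of Definition~\ref{Def.stabilizing} to the resulting bounded $\Lambda_{Q_n(nz_i)}$-functionals multiplied by the truncation indicators $\mathbbm{1}\lbrace R(Q_n(nz_i))<n\rbrace$. The only cosmetic remark is that the containment of the relevant users in $Q_n(nz_i)$ follows simply from the subsegments $c$ being subsets of $s_{z,e}\subset Q_n(nz_i)$, independently of $r$ being small.
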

\begin{proof}
As a starting point, note that $\forall z \in \mathbb{Z}^{d}, \zeta_{z} = 1 - \mathbbm{1} \lbrace z \, \text{is $n$-good} \rbrace$. It is therefore equivalent to prove that the process of $n$-good sites is $3$-dependent. \\
\indent For $z \in \mathbb{Z}^{d}$, set $\xi_{z} = \mathbbm{1} \lbrace z \, \text{is $n$-good} \rbrace$. Let $ \lbrace z_{1}, \ldots z_{q} \rbrace \subset \mathbb{Z}^{d}$ be such that $\forall i \neq j, \lVert z_i - z_j \rVert_{\infty} > 3$.
We want to show that the random variables $(\xi_{z_i})_{1\leq i \leq q}$ are independent. Since we are dealing with indicator functions, this is equivalent to showing that: 
$$\mathbb{E}\left(\prod_{i=1}^{q} \xi_{z_i} \right) = \prod_{i=1}^{q} \mathbb{E}(\xi_{z_i}).$$
Now, we have:

\begin{align}
  \nonumber \mathbb{E}\left(\prod_{i=1}^{q} \xi_{z_i} \right) &= \mathbb{E}\left[\mathbb{E}\left(\prod_{i=1}^{q} \xi_{z_i} \Bigg\vert \Lambda \right)\right] \\ \nonumber &= \mathbb{E}\Bigg[\mathbb{E}\Bigg(\prod_{i=1}^{q}\mathbbm{1}\lbrace R(Q_n(nz_i)) < n \rbrace \prod_{i=1}^{q} \mathbbm{1} \lbrace \forall \, e \in E, \, s_{z_i,e} \, \text{is closed} \rbrace  \Bigg\vert \Lambda \Bigg) \Bigg] \\  &= \label{eq3}  \mathbb{E}\Bigg[\prod_{i=1}^{q} \mathbbm{1} \lbrace R(Q_n(nz_i)) <n \rbrace \mathbb{E}\Bigg( \prod_{i=1}^{q} \mathbbm{1} \lbrace \forall \, e \in E, \, s_{z_i,e} \, \text{is closed} \rbrace  \Bigg\vert \Lambda \Bigg) \Bigg],
\end{align}
where we have used $\Lambda$-measurability of the random variables $\lbrace R_x \rbrace_{x \in \mathbb{R}^d}$ in~\eqref{eq3}. \\

For $1 \leq i \leq q$, set $A_{z_i} \coloneqq \lbrace \forall \, e \in E, \, s_{z_i,e} \, \text{is closed} \rbrace$. According to Definition~\ref{Def.open/closed/subcritical}, for a given $1 \leq i \leq q$, the event $A_{z_i}$ only depends on the configuration of the random measure $\Lambda$ and of the Cox point process $X^{\lambda}$ inside the $d$-dimensional cube $Q_n(nz_i)$. Therefore, given $\Lambda$, the events $\lbrace A_{z_i}  : 1 \leq i \leq q \rbrace$ only depend on $X^{\lambda} \cap Q_n(nz_i)$, $1 \leq i \leq q$. Since we have $\forall i \neq j, \lVert z_i - z_j \rVert_{\infty} > 3$, then the $d$-dimensional cubes $Q_n(nz_i)$ are disjoint. Moreover, given $\Lambda$, $X^{\lambda}$ has the distribution of a Poisson point process. Thus, by Poisson independence property, the events $(A_{z_i})_{1 \leq i \leq q}$ are conditionally independent given $\Lambda$. Hence~\eqref{eq3} yields:
\begin{align}
 \label{eq4} \mathbb{E}\left(\prod_{i=1}^{q} \xi_{z_i} \right) &= \mathbb{E}\Bigg[\prod_{i=1}^{q} \mathbbm{1} \lbrace R(Q_n(nz_i)) <n \rbrace \prod_{i=1}^{q} \mathbb{E}\Bigg(  \mathbbm{1} \lbrace \forall \, e \in E, \, s_{z_i,e} \, \text{is closed} \rbrace  \Bigg\vert \Lambda \Bigg) \Bigg]. 
\end{align}
Recall that the restriction of the random measure $\Lambda$ to the $d$-dimensional cube $Q_n(x)$ is denoted by $\Lambda_{Q_n(x)} (\cdot) =: \Lambda(Q_n(x) \cap \, \cdot)$ and set $f(\Lambda_{Q_n(x)}) \coloneqq \mathbb{E}\Bigg(  \mathbbm{1} \lbrace \forall \, e \in E, \, s_{x,e} \, \text{is closed} \rbrace  \Bigg\vert \Lambda \Bigg)$. Then $f$ is a deterministic, bounded and measurable function of $\Lambda_{Q_n(x)}$. Moreover, the set $\varphi \coloneqq \lbrace nz_1,\ldots,nz_q \rbrace \subset \mathbb{R}^{d}$ is a finite subset of $\mathbb{R}^d$ satisfying: $$\forall i \neq j, \lVert nz_i - nz_j \rVert_{\infty} > 3n.$$ Since the infinite norm is always upper bounded by the Euclidean norm, we have $\forall i \neq j, \lVert nz_i - nz_j \rVert_{2} > 3n $, and so $\varphi$ satisfies:
$$\forall x \in \varphi, \, \text{dist}(x, \varphi \setminus \lbrace x \rbrace) > 3n.$$
Hence, by condition (3) in the definition of stabilization (Definition~\ref{Def.stabilizing}), the random variables appearing in the right-hand side of~\eqref{eq4} are independent. This yields:
\begin{align*}
     \mathbb{E}\left(\prod_{i=1}^{q} \xi_{z_i} \right) &= \prod_{i=1}^{q} \mathbb{E}\Bigg(\mathbbm{1} \lbrace R(Q_n(nz_i)) <n \rbrace \prod_{i=1}^{q} \mathbb{E}\Bigg(  \mathbbm{1} \lbrace \forall \, e \in E, \, s_{z_i,e} \, \text{is closed} \rbrace  \Bigg\vert \Lambda \Bigg) \Bigg) \\
     &=\prod_{i=1}^{q} \mathbb{E}\left(\xi_{z_i} \right),
     \end{align*}
     
thus concluding the proof of the lemma.
\end{proof}
Now we prove that the probability for an arbitrary site, which by stationarity can be chosen to be the origin $\boldsymbol{0} \in \mathbb{Z}^{d}$, to be $n$-bad can be made arbitrarily
small  when first  taking  some large enough finite $n$
and then positive small enough $\lambda,r$,
as stated in the following Lemma.
\begin{lemma}\label{Claim3.subcritical}
\begin{equation*}
    \lim_{n \uparrow \infty}\lim_{\lambda, r \downarrow 0} \mathbb{P}(\boldsymbol{0} \text{ is $n$-bad}) = 0.
\end{equation*}
\end{lemma}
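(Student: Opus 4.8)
The plan is to decompose the event that $\boldsymbol{0}$ is $n$-bad by negating the two defining conditions of $n$-goodness and bounding each contribution separately. Writing $N_n \coloneqq \#\{e \in E : e \cap Q_n \neq \emptyset\}$ for the (almost surely finite) number of streets meeting $Q_n$, and recalling from Definition~\ref{Def.open/closed/subcritical} that a segment $s_{0,e}$ is open either because $|s_{0,e}|\leq r$ or because $|s_{0,e}|>r$ and every length-$r$ subsegment carries a user, a union bound over the negations gives
\begin{equation*}
\mathbb{P}(\boldsymbol{0}\text{ is $n$-bad}) \leq \mathbb{P}(R(Q_n)\geq n) + \mathbb{P}\bigl(\exists\, e: s_{0,e}\neq\emptyset,\ |s_{0,e}|\leq r\bigr) + \mathbb{P}\bigl(\exists\, e: |s_{0,e}|> r,\ s_{0,e}\text{ open}\bigr).
\end{equation*}
I would treat the three terms in turn, noting that the first depends only on $n$, the second only on $r$, and the third on both $\lambda$ and $r$.

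The first term is exactly $1-\mathbb{P}(R(Q_n)<n)$, which tends to $0$ as $n\to\infty$ by condition~(2) of stabilization (Definition~\ref{Def.stabilizing}), applicable because $\Lambda$ is stabilizing by Proposition~\ref{prop.PVT-stabilizes}; crucially it involves neither $\lambda$ nor $r$, so it survives the inner limit untouched. For the second term I would observe that the events $\{\exists\, e: s_{0,e}\neq\emptyset,\ |s_{0,e}|\leq r\}$ decrease as $r\downarrow 0$ to $\{\exists\, e: e\cap Q_n\neq\emptyset,\ |e\cap Q_n|=0\}$, i.e.\ the event that some edge meets $Q_n$ only in a single point (a tangency to $\partial Q_n$). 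Exactly as in the proof of Lemma~\ref{Claim1.subcritical}, such degenerate intersections occur with probability zero for a Voronoi tessellation of a stationary process, so by continuity of measure this term tends to $0$ as $r\downarrow 0$, uniformly in $\lambda$.

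The core of the argument is the third term, which I would handle by conditioning on $\Lambda$. Fix a street $e$ with $\ell\coloneqq|s_{0,e}|>r$ and partition $s_{0,e}$ into $m\coloneqq\lfloor \ell/r\rfloor\geq 1$ disjoint closed subsegments of length $r$. If $s_{0,e}$ is open then each such subsegment, being a length-$r$ connected closed subset of $s_{0,e}$, must contain at least one user; since, given $\Lambda$, the process $X^{\lambda}$ is Poisson of intensity $\lambda$ along the streets and its counts on disjoint subsegments are independent, I obtain $\mathbb{P}(s_{0,e}\text{ open}\mid\Lambda)\leq(1-e^{-\lambda r})^{m}\leq 1-e^{-\lambda r}$. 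A union bound over the at most $N_n$ qualifying streets then yields $\mathbb{P}(\exists\, e: |s_{0,e}|> r,\ s_{0,e}\text{ open}\mid\Lambda)\leq N_n\,(1-e^{-\lambda r})$. As $N_n<\infty$ almost surely and this conditional probability is bounded by $1$, dominated convergence forces the unconditional third term to $0$ as $\lambda,r\downarrow 0$, because $1-e^{-\lambda r}\to 0$.

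Combining the three bounds, the inner limit satisfies $\lim_{\lambda,r\downarrow 0}\mathbb{P}(\boldsymbol{0}\text{ is $n$-bad})\leq\mathbb{P}(R(Q_n)\geq n)$, and letting $n\to\infty$ drives the right-hand side to $0$, which proves the lemma. The main obstacle is the third term: the delicate point is that one should \emph{not} try to pull the random quantity $N_n$ out of the expectation as a constant, but rather exploit its almost-sure finiteness together with dominated convergence. This is also why the order of limits is essential — with $n$ held fixed during the inner limit, $N_n$ is a fixed almost-surely-finite random variable and the vanishing factor $1-e^{-\lambda r}$ does all the work, whereas sending $n\to\infty$ first would let the number of streets blow up and destroy the bound. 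A secondary, routine point is the negligibility of degenerate edge/cube intersections, which simply reuses the stationarity argument already invoked in Lemma~\ref{Claim1.subcritical}.
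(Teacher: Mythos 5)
Your proof is correct and follows essentially the same route as the paper: the same three-term decomposition (stabilization radius, short segments, long-but-open segments), with term (a) killed by stabilization as $n\to\infty$ and terms (b), (c) killed by the a.s.\ finiteness of the set of edges meeting $Q_n$ as $r,\lambda\downarrow 0$. The only cosmetic differences are that the paper bounds term (c) more crudely by $\mathbb{P}(X^{\lambda}(Q_n)\geq 1)$ (which has the minor advantage of not depending on $r$), whereas you use the subsegment partition and dominated convergence over $N_n$, and that you explicitly dispose of the null event $|e\cap Q_n|=0$, which the paper silently drops.
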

\begin{proof}
Note that we have:
\begin{align*}
    \mathbb{P}(\boldsymbol{0} \, \text{is $n$-bad}) &= \mathbb{P}\Big(\lbrace R(Q_n) \geq n \rbrace \cup \lbrace \exists \, e \in E:\,  e \cap Q_n\not=\emptyset \text{ and  open}  \rbrace \Big) \\ & \leq \mathbb{P}\left( R(Q_n) \geq n \right) + \mathbb{P}\left(  \exists \, e \in E:\,  e \cap Q_n\not=\emptyset \text{ and  open} \right)\\
&\le
\mathbb{P}\left( R(Q_n) \geq n \right) \tag{a}\\
&\hspace{1em}
+ \mathbb{P}\left(  \exists \, e \in E:\ 0<|e \cap Q_n|\le r \right)\tag{b}\\
&\hspace{1em}
+ \mathbb{P}\left(  \exists \, e \in E:\ e \cap Q_n \text{ satisfies cond.~(2) in Definition~\ref{Def.open/closed/subcritical}}\right).\tag{c}
\end{align*}
Take any $\epsilon>0$. By the stabilization property of the PVT (Proposition~\ref{prop.PVT-stabilizes})  we have $\lim_{n \uparrow \infty} \mathbb{P}(R(Q_n) \geq n)  = 0$ and so we can fix   $n$ large enough to make the probability in~(a) smaller than $\epsilon/3$. 
Then, $Q_n$ intersects  almost surely zero or a finite number of edges $e\in E$. Hence the probability in~(b) converges to $0$ when $r\to 0$ and, consequently, we can take $r$ small enough to make the probability in~(b)  smaller  than $\epsilon/3$.
Finally, for given $n$ (and independently of $r$), we can take $\lambda$ small enough to make the probability in~(c) smaller than $\epsilon/3$. Indeed, this latter probability is dominated by the  probability that $X^\lambda(Q_n)\ge 1$ and thus converges to $0$ when $\lambda\to 0$ for any finite $n$.
This concludes the proof of Lemma~\ref{Claim3.subcritical}.
\end{proof}

By Lemmas~\ref{Claim2.subcritical} and~\ref{Claim3.subcritical}, using~\cite[Theorem 0.0]{liggett_domination_1997}, for large enough $n<\infty$ and small enough $r>0,\lambda>0$, the process of $n$-bad sites is stochastically dominated from above by an independent site percolation model on the integer lattice where the probability of having an open site is arbitrarily small. Hence this independent site percolation model is sub-critical. Consequently, we can make the process of $n$-bad sites  non-percolating. By~Lemma~\ref{Claim1.subcritical}
the same is true for $\mathcal{G}$, thus concluding the proof of Theorem~\ref{Thm.analogue:subcritical}.

\subsection{Proof of Theorem~\ref{Thm.analogue:supercritical}}
We shall prove that for large enough $p<1$ the model  $\mathcal{G}$ percolates with positive probability  
for all  $r>0$ and large enough 
$\lambda<\infty$ (depending on $r$).

As in the proof of Theorem~\ref{Thm.analogue:subcritical}, we will use a coarse-graining argument.
To this end, consider the following percolation model on the integer lattice $\mathbb{Z}^d$. For $n \geq 1$, say a site $z \in \mathbb{Z}^{d}$ is $n$-good if the following conditions are satisfied:
\begin{enumerate}[label = (\arabic*)]
\item $R(Q_{6n}(nz)) < 6n$. 
\item There exists a \emph{full} street (not just a street segment!) entirely contained in $Q_n(nz)$. By abuse of notation, we denote this by $E \cap Q_n(nz) \neq \emptyset$. 
\item There exists $e \in E \cap Q_n(nz)$ such that $e$ is open, in the sense of Definition~\ref{Def.open/closed/subcritical}. In other words, there exists an open street which is fully included in the cube $Q_n(nz)$.
\item All crossroads in $Q_{6n}(nz)$ are open, in the sense of Definition~\ref{Def.open/crossroad}.
\item Every two open edges $e,e' \in E \cap Q_{3n}(nz)$ are connected by a path in $\mathcal{G}\cap Q_{6n}(nz)$.
\end{enumerate}
We say a site $z \in \mathbb{Z}^{d}$ is $n$-bad if it is not $n$-good. \\

The $n$-good sites have been defined so as to satisfy the following implication.
\begin{lemma}
\label{Claim1.supercritical}
Percolation of the process of $n$-good sites implies percolation of the connectivity graph $\mathcal{G}$.
\end{lemma}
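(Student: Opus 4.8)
The plan is to take an infinite connected component of $n$-good sites in $\mathbb{Z}^d$ and show that the designated open edges attached to these sites all lie in a single unbounded component of $\mathcal{G}$. First I would assume the process of $n$-good sites percolates and fix an infinite nearest-neighbour connected cluster $\mathcal{K} \subseteq \mathbb{Z}^d$ of $n$-good sites (with adjacency $\lVert z-z'\rVert_1 = 1$). For every $z \in \mathcal{K}$, conditions~(2) and~(3) of $n$-goodness supply at least one full open edge $e_z \in E$ entirely contained in $Q_n(nz)$, and I would first observe that such an open edge is an internally connected subgraph of $\mathcal{G}$. Indeed, by Definition~\ref{Def.open/closed/subcritical} an open edge is either short ($\lvert e_z\rvert \le r$), in which case its two endpoints --- which are open crossroads by condition~(4), hence vertices of $\mathcal{G}$ --- are within distance $r$ and thus joined by an edge of $\mathcal{G}$; or it is long and densely enough filled by users of $X^\lambda$ that consecutive points along $e_z$ are at distance at most $r$, again yielding a connected path in $\mathcal{G}$. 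In either case $e_z$ carries at least one vertex of $Z$ and is connected in $\mathcal{G}$.

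Next I would establish the crucial geometric containment: whenever $z,z' \in \mathbb{Z}^d$ satisfy $\lVert z-z'\rVert_1 = 1$, the cube $Q_n(nz')$ is contained in $Q_{3n}(nz)$. This holds because $\lVert nz - nz'\rVert_\infty = n$ while $Q_n(nz')$ extends only $n/2$ beyond its centre in each coordinate, so every point of $Q_n(nz')$ is within $\ell^\infty$-distance $n + n/2 = 3n/2$ of $nz$, which is exactly the half-side of $Q_{3n}(nz)$. Consequently, for adjacent good sites $z \sim z'$ in $\mathcal{K}$, both $e_z$ and $e_{z'}$ are open edges belonging to $E \cap Q_{3n}(nz)$, so condition~(5) applied at the good site $z$ furnishes a path in $\mathcal{G}\cap Q_{6n}(nz)$ joining $e_z$ to $e_{z'}$. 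Hence $e_z$ and $e_{z'}$ lie in the same connected component of $\mathcal{G}$.

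Finally I would chain these pairwise connections along $\mathcal{K}$: since each $e_z$ is simultaneously connected in $\mathcal{G}$ to $e_{z'}$ for every neighbour $z'$ of $z$ in $\mathcal{K}$, the family $\{e_z : z \in \mathcal{K}\}$ lies in a single connected component $\mathcal{C}$ of $\mathcal{G}$. As $\mathcal{K}$ is infinite and the centres $nz$ ($z \in \mathcal{K}$) are unbounded in $\mathbb{R}^d$, the edges $e_z \subseteq Q_n(nz)$, and thus the vertices of $\mathcal{G}$ they carry, are spread over an unbounded region, so $\mathcal{C}$ is unbounded. Therefore $\mathcal{G}$ percolates, which is the claimed implication.

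The step I expect to be most delicate is the bookkeeping of the three nested window scales $Q_n$, $Q_{3n}$, $Q_{6n}$: one must verify that a neighbouring site's open edge always lands inside the medium window $Q_{3n}(nz)$ on which condition~(5) operates, and that the connecting path stays inside the large window $Q_{6n}(nz)$, so that the enlargement factors are mutually compatible across overlapping good cubes. Once the containment $Q_n(nz') \subseteq Q_{3n}(nz)$ is checked, the remainder is routine: the overlap lets the local connectivity guarantees of condition~(5) chain transitively through the common \emph{hub} edges $e_z$. Note that condition~(1) on the stabilization radii is not needed for this implication; together with condition~(4) it will instead serve to control the probability of $n$-goodness and the finite range of dependence in the subsequent lemmas.
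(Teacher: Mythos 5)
Your proposal is correct and follows essentially the same route as the paper's proof: extract an open edge $e_z\in E\cap Q_n(nz)$ from conditions (2)--(3) at each good site, verify the containment $Q_n(nz')\subseteq Q_{3n}(nz)$ for $\lVert z-z'\rVert_1=1$ so that condition (5) (with (4)) links $e_z$ to $e_{z'}$ inside $Q_{6n}(nz)$, and chain along the infinite cluster. Your additional remark that each open edge is itself internally connected in $\mathcal{G}$ (via open endpoints or the $r$-dense users) is a worthwhile clarification of a point the paper leaves implicit, but it does not change the argument.
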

\begin{proof}
Let $\mathcal{C}$ be an infinite connected component of $n$-good sites. Consider $z,z'\in \mathcal{C}$ 
such that $\lVert z-z' \rVert_1=1$.
Without loss of generality, assume
 $z = (a_1,a_2, \ldots, a_d)$ for some $a_1, a_2, \ldots, a_d \in \mathbb{Z}$ and  $z' = (a_1+1,a_2, \ldots, a_d)$.
 By condition (2)  in the definition of $n$-goodness, we can find  open
 edges $e\in E \cap Q_n(nz)$
 and $e'\in E \cap Q_n(nz')$.  Since
\begin{gather*}
Q_n(nz) = \left[na_1-n/2,na_1+n/2\right] \times \ldots \times \left[na_d-n/2,na_d+n/2\right], \\ Q_n(nz') = \left[na_1+n/2,na_1+3n/2\right] \times \left[na_2-n/2,na_2+n/2\right] \times \ldots \times \left[na_d-n/2,na_d+n/2\right], \\
Q_{3n}(nz) = \left[na_1-3n/2,na_1+3n/2\right] \times \ldots \times  \left[na_d-3n/2,na_d+3n/2\right], \\
Q_{6n}(nz) = \left[na_1-3n,na_1+3n\right] \times \ldots \times  \left[na_d-3n,na_d+3n\right],
\end{gather*}
we have $Q_{n}(nz') \subset Q_{3n}(nz)$ and so $e' \in E \cap Q_{n}(nz')$ implies $e' \in E \cap Q_{3n}(nz)$. Since we also have $e \in E \cap Q_{n}(nz) \subset  E \cap Q_{3n}(nz)$ and $e,e'$ are both open, by conditions (4) and  (5) in the definition of $n$-goodness, $e$ and $e'$ are connected by a path $\mathcal{L}$ in $\mathcal{G}\cap Q_{6n}(nz)$. Therefore, the path $\mathcal{L}$ also connects $e$ and $e'$ in $\mathcal{G}$, thus giving rise to an infinite connected component in $\mathcal{G}$.  This concludes the proof of Lemma~\ref{Claim1.supercritical}.
\end{proof}

\begin{lemma}
\label{Claim2.supercritical}
For $z \in \mathbb{Z}^{d}$, set $\xi_{z} \coloneqq \mathbbm{1}\lbrace z \, \text{is $n$-good} \rbrace$. Then $(\xi_{z})_{z \in \mathbb{Z}^{d}}$ is an $18$-dependent random field.
\end{lemma}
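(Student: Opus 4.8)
The plan is to establish $18$-dependence by showing that the $n$-goodness of a site $z$ is a deterministic function of the configuration of the driving objects (the generating PPP $X_S$, the user process $X^\lambda$, and the relay process $Y$) localized to the cube $Q_{6n}(nz)$, and then to invoke the stabilization property of the PVT exactly as in the proof of Lemma~\ref{Claim2.subcritical}. First I would inspect each of the five defining conditions of $n$-goodness in turn and record on which region each one depends. Conditions (2), (3) and (5) involve edges, openness of edges, and paths in $\mathcal{G}$ restricted to cubes of side $n$, $n$ and $6n$ respectively, all centered at $nz$; condition (4) concerns crossroads in $Q_{6n}(nz)$; and condition (1) concerns the stabilization field $R$ over $Q_{6n}(nz)$. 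Since $Q_n(nz), Q_{3n}(nz) \subset Q_{6n}(nz)$, every geometric and point-process ingredient of conditions (2)--(5) is determined by $\Lambda$, $X^\lambda$ and $Y$ inside $Q_{6n}(nz)$, and condition (1) is $R$-measurable over the same cube.

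The key computation is then a separation estimate: if $z, z' \in \mathbb{Z}^d$ satisfy $\lVert z - z'\rVert_\infty > 18$, then $n\lVert z - z'\rVert_\infty > 18n$, and two cubes of side $6n$ centered at $nz$ and $nz'$ are separated because each has half-side $3n$, so $\mathrm{dist}(Q_{6n}(nz), Q_{6n}(nz')) \ge 18n - 6n = 12n > 0$; in particular the cubes are disjoint. I would spell this out to confirm that the threshold $18$ is the correct one given the largest cube appearing in the definition has side $6n$. With disjointness in hand, I would condition on $\Lambda$ and argue exactly as before: given $\Lambda$, the restrictions $X^\lambda \cap Q_{6n}(nz_i)$ are independent across $i$ by the Poisson independence property, and the restrictions $Y \cap Q_{6n}(nz_i)$ are independent by the conditional Bernoulli structure and the conditional independence $X^\lambda \indep Y \mid \Lambda$; hence conditions (2)--(5) are conditionally independent across the $z_i$ given $\Lambda$. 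Writing the goodness indicator of $z_i$ as a bounded measurable function of $\Lambda_{Q_{6n}(nz_i)}$ (after taking the conditional expectation of the $X^\lambda, Y$-dependent part), I would then apply condition~(3) of stabilization (Definition~\ref{Def.stabilizing}) to the finite set $\varphi = \{nz_1,\dots,nz_q\}$, which satisfies $\mathrm{dist}(x, \varphi \setminus \{x\}) > 18n > 3\cdot 6n$, to factor out the expectation and obtain full independence.

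I expect the main obstacle to be the bookkeeping in condition~(5), since it refers to paths in $\mathcal{G} \cap Q_{6n}(nz)$ rather than to a single edge or crossroad, and one must be careful that this event is genuinely measurable with respect to the processes restricted to $Q_{6n}(nz)$ and not secretly dependent on edges or users just outside the cube. The point to verify is that a path of $\mathcal{G}$ lying inside $Q_{6n}(nz)$ uses only vertices of $Z$ in that cube and only the connectivity relation \eqref{connectivity_mechanism} among them, so the event is indeed local to $Q_{6n}(nz)$; the openness of the edges involved, via Definition~\ref{Def.open/closed/subcritical}, likewise depends only on $X^\lambda$ and $\Lambda$ inside the cube because an edge $e \in E \cap Q_{6n}(nz)$ is fully contained in that cube together with the users on it. Once this locality is granted, the stabilization argument applies verbatim to the combined indicator $\prod_{i=1}^q \xi_{z_i}$, and the proof closes in the same manner as Lemma~\ref{Claim2.subcritical}, with the single change that the relevant cube is $Q_{6n}(nz)$ rather than $Q_n(nz)$, which is precisely what promotes the dependence range from $3$ to $18$.
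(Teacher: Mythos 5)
Your proposal is correct and follows essentially the same route as the paper: localize conditions (2)--(5) to $Q_{6n}(nz)$, use the disjointness of these cubes together with the conditional (Poisson/Bernoulli) independence given $\Lambda$, and then apply condition~(3) of Definition~\ref{Def.stabilizing} with $n$ replaced by $6n$ to the set $\varphi = \lbrace nz_1,\dots,nz_q\rbrace$. The only cosmetic slip is writing $18n > 3\cdot 6n$ where equality holds, which does not affect the argument since the separation is strict.
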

\begin{proof}
In the same way as in the proof of Lemma~\ref{Claim2.subcritical}, it suffices to prove that for all finite $\psi = \lbrace z_{1}, \ldots z_{q} \rbrace \subset \mathbb{Z}^{d}$ such that $\forall i \neq j, \lVert z_i - z_j \rVert_{\infty} > 18$, we have:
$$\mathbb{E}\left(\prod_{i=1}^{q} \xi_{z_i} \right) = \prod_{i=1}^{q} \mathbb{E}(\xi_{z_i}).$$
Denote respectively by $A_z,B_z,C_z,D_z,F_z$ the events that the conditions (1), (2), (3), (4), (5) in the definition of $n$-goodness hold for $z \in \mathbb{Z}^{d}$. We thus have: 
$$\forall z \in \mathbb{Z}^{d}, \, \xi_{z} = \mathbbm{1}\lbrace A_z \rbrace \mathbbm{1}\lbrace B_z \rbrace \mathbbm{1}\lbrace C_z \rbrace \mathbbm{1}\lbrace D_z \rbrace \mathbbm{1}\lbrace F_z \rbrace.$$

Note first that whenever $z \in \mathbb{Z}^{d}$, the indicators $\mathbbm{1} \lbrace A_z \rbrace$ and $\mathbbm{1} \lbrace B_z \rbrace$ are $\Lambda$-measurable. Thus, we have :
\begin{align}
\nonumber \mathbb{E}\left(\prod_{i=1}^{q} \xi_{z_i} \right) &= \mathbb{E}\left[\mathbb{E}\left(\prod_{i=1}^{q} \xi_{z_i} \Bigg\vert \Lambda \right)\right] 
\\ 
\nonumber &= \mathbb{E}\left[\prod_{i=1}^{q} \mathbbm{1}\lbrace A_{z_i} \rbrace \mathbbm{1}\lbrace B_{z_i}\rbrace\mathbb{E}\left(  \prod_{i=1}^{q}\mathbbm{1}\lbrace C_{z_i} \cap D_{z_i} \cap F_{z_i}  \rbrace \Bigg\vert \Lambda \right)\right].
\end{align}
Now, note that conditioned on $\Lambda$, for each $1 \leq i \leq q$, the event $ C_{z_i} \cap D_{z_i} \cap F_{z_i} $ only depends on the configuration of $X^{\lambda}$ and  $Y$ inside of the $d$-dimensional cube $Q_{6n}(nz_i)$.
Since $\psi$ satisfies $\forall i \neq j, \lVert z_i - z_j \rVert_{\infty} > 18$, then we have $\forall i \neq j, \lVert nz_i - nz_j \rVert_{\infty} > 18n$. As a matter of fact, the cubes $\lbrace Q_{6n}(nz_i)) \, : \, 1 \leq i \leq q\rbrace$ are disjoint, i.e. $$\forall i \neq j, Q_{6n}(nz_i) \cap Q_{6n}(nz_j) = \emptyset.$$
By the complete independence of Poisson and Bernoulli processes (recall that, given $\Lambda$, $X^\lambda$ has the distribution of a Poisson point process and $Y$ the distribution of a Bernoulli point process), we have:

\begin{align}
\nonumber \mathbb{E}\left(\prod_{i=1}^{q} \xi_{z_i} \right)  &= \mathbb{E}\left[\prod_{i=1}^{q} \mathbbm{1}\lbrace A_{z_i} \rbrace \mathbbm{1}\lbrace B_{z_i}\rbrace\mathbb{E}\left(  \prod_{i=1}^{q}\mathbbm{1}\lbrace C_{z_i} \cap D_{z_i} \cap F_{z_i}  \rbrace \Bigg\vert \Lambda \right)\right] \\ \nonumber &= \mathbb{E}\left[\prod_{i=1}^{q} \mathbbm{1}\lbrace A_{z_i} \rbrace \mathbbm{1}\lbrace B_{z_i}\rbrace  \prod_{i=1}^{q}\mathbb{E}\left( \mathbbm{1}\lbrace C_{z_i} \cap D_{z_i} \cap F_{z_i}  \rbrace \Bigg\vert \Lambda \right)\right] 
\\ \nonumber &= 
\mathbb{E}\left[\prod_{i=1}^{q} \mathbbm{1}\lbrace A_{z_i} \rbrace  \prod_{i=1}^{q}\mathbb{E}\left( \mathbbm{1}\lbrace B_{z_i} \cap C_{z_i} \cap D_{z_i} \cap F_{z_i}  \rbrace \Bigg\vert \Lambda \right)\right]\\
&=\mathbb{E}\left[\prod_{i=1}^{q} \mathbbm{1}\lbrace R(Q_{6n}(nz_i)) < 6n \rbrace  f(\Lambda_{Q_{6n}(nz_i)})\right], \label{eq11}
\end{align}
where $f(\Lambda_{Q_{6n}(x)}) \coloneqq \mathbb{E}\left(\mathbbm{1}\lbrace B_{x} \cap C_{x} \cap D_{x} \cap F_{x}  \rbrace \,  \vert \, \Lambda \right)$, a bounded measurable deterministic function of $\Lambda_{Q_{6n}(x)}$,
and where
by $\Lambda$-measurability of the events $ \lbrace B_{z_i} \, : \, 1 \leq i \leq q \rbrace$, we  put their indicators into the conditional expectation given $\Lambda$.
Now, the set $\varphi \coloneqq \lbrace nz_1,\ldots,nz_p \rbrace \subset \mathbb{R}^{d}$ is finite and satisfies: $$\forall i \neq j, \lVert nz_i - nz_j \rVert_{\infty} > 18n.$$ Since the infinite norm is always upper bounded by the Euclidean norm, we have $\forall i \neq j, \lVert nz_i - nz_j \rVert_{2} > 18n  $, and so $\varphi$ satisfies:
$$\forall x \in \varphi, \, \text{dist}(x, \varphi \setminus \lbrace x \rbrace) > 18n=3 \times 6n.$$
We can therefore apply condition (3) in Definition~\ref{Def.stabilizing} (with $n$ replaced by $6n$) to get that the random variables appearing in the right-hand side of~\eqref{eq11} are independent. Hence:
\begin{align*}
\label{eq12} \mathbb{E}\left(\prod_{i=1}^{q} \xi_{z_i} \right)  &= \prod_{i=1}^{q} \mathbb{E}\left[\mathbbm{1}\lbrace R(Q_{6n}(nz_i) < 6n \rbrace   f(\Lambda_{Q_{6n}(nz_i)})\right]\\
&= \prod_{i=1}^{q} \mathbb{E}(\xi_{z_i}),
\end{align*}
which concludes the proof of Lemma~\ref{Claim2.supercritical}.
\end{proof}

\begin{lemma}\label{Claim3.supercritical}
For any $r>0$ we have 
\begin{equation*}
    \lim_{n \uparrow \infty}\lim_{ p \uparrow 1,\,\lambda \uparrow \infty}
    \mathbb{P}(\boldsymbol{0} \, \text{is $n$-good}) = 1.
\end{equation*}
\end{lemma}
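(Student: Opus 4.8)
The plan is to bound $\mathbb{P}(\boldsymbol 0\text{ is }n\text{-bad})$ by the sum of the probabilities that each of the five defining conditions of $n$-goodness fails, and to separate their treatment according to which limit controls them: conditions (1), (2) and the geometric content of (5) depend only on the PVT and are controlled by the outer limit $n\uparrow\infty$, whereas conditions (3) and (4) are controlled by the inner limit $p\uparrow 1,\lambda\uparrow\infty$. Concretely, I would exhibit a favourable event contained in $\{\boldsymbol 0\text{ is }n\text{-good}\}$, of the form
\[
\{R(Q_{6n})<3n/2\}\cap\{\exists\text{ a full edge contained in }Q_n\}\cap G_n,
\]
where $G_n$ is the event that every crossroad in $Q_{6n}$ is open (Definition~\ref{Def.open/crossroad}) and every edge meeting $Q_{6n}$ is densely covered, in the sense that every connected topologically closed subsegment of length $r$ of $e\cap Q_{6n}$ contains a point of $X^\lambda$. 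I would then check that this favourable event forces all five conditions and that its probability tends to $1$ under the stated iterated limit.

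Conditions (1), (2), (3) and (4) form the routine part. For (1), the strengthened bound $\mathbb{P}(R(Q_{6n})\geq 3n/2)\to 0$ as $n\uparrow\infty$ follows from the stabilization property of the PVT (Proposition~\ref{prop.PVT-stabilizes} and condition~(2) of Definition~\ref{Def.stabilizing}), exactly as for the weaker threshold $6n$; since $3n/2<6n$ this also secures condition (1) itself. For (2), I would use that $Q_n$ contains a full edge with probability tending to $1$ as $n\uparrow\infty$, a purely geometric property of the PVT that does not depend on $p$ or $\lambda$. For (4), conditionally on $\Lambda$ there are almost surely finitely many crossroads in $Q_{6n}$, each open independently with probability $p$, so $\mathbb{P}(\text{all open}\mid\Lambda)=p^{N}\to 1$ almost surely as $p\uparrow1$, and dominated convergence gives that this probability tends to $1$ for fixed $n$. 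For (3), on $G_n$ any full edge contained in $Q_n$ is automatically open by Definition~\ref{Def.open/closed/subcritical}; and the coverage part of $G_n$ has conditional probability tending to $1$ as $\lambda\uparrow\infty$, because each of the finitely many edges meeting $Q_{6n}$ has finite length and the probability that some length-$r$ subsegment is free of Cox points vanishes as $\lambda\uparrow\infty$ (again dominated convergence for the unconditional statement).

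The main obstacle is condition (5): connecting, inside $\mathcal{G}\cap Q_{6n}$, every two open full edges contained in $Q_{3n}$. Here I would invoke the asymptotic essential connectedness of $\Lambda$ (Definition~\ref{Def.eac}, applied at scale $m=3n$, so that $Q_m=Q_{3n}\subset Q_{2m}=Q_{6n}$): on $\{R(Q_{6n})<3n/2\}$ the street network $\mathrm{supp}(\Lambda_{Q_{3n}})$ lies in a single connected component of $\mathrm{supp}(\Lambda_{Q_{6n}})$. Consequently any two full edges contained in $Q_{3n}$ are joined by a continuous path that runs entirely inside the street network restricted to $Q_{6n}$. The remaining, most delicate step is to discretize this continuous street path into a genuine $\mathcal{G}$-path that stays in $Q_{6n}$: along each traversed edge the dense coverage in $G_n$ guarantees that consecutive points of $Z$ are within distance $r$, so one may hop from point to point, while at each crossroad on the path the openness granted by $G_n$ supplies a point of $Z$ through which one transfers between successive edges, the coverage ensuring this crossroad is within distance $r$ of a neighbouring point of $Z$ on each incident edge. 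The gap between the scales $Q_{3n}$ and $Q_{6n}$ is precisely what leaves room for the resulting path to remain inside $Q_{6n}$. Thus, on the favourable event, condition (5) holds deterministically, and combining the three steps the favourable event has probability tending to $1$: first letting $p\uparrow1,\lambda\uparrow\infty$ at fixed $n$ sends $\mathbb{P}(G_n)\to1$, then letting $n\uparrow\infty$ sends both $\mathbb{P}(R(Q_{6n})<3n/2)$ and $\mathbb{P}(\exists\text{ full edge in }Q_n)$ to $1$, which yields the claim.
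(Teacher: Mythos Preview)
Your proposal is correct and follows essentially the same route as the paper's proof: both strengthen condition~(1) to a tighter stabilization threshold (you use $R(Q_{6n})<3n/2$, the paper uses the even stronger $R(Q_{6n})<n/2$) precisely in order to invoke asymptotic essential connectedness at scale $3n$ for condition~(5), and both reduce conditions~(3), (4) and the discretization in~(5) to the event that all crossroads in $Q_{6n}$ are open and all edge segments meeting $Q_{6n}$ are open, controlled by $p\uparrow 1$ and $\lambda\uparrow\infty$ respectively. The only cosmetic difference is that the paper phrases the argument via a union bound on the complements $\mathbb{P}(\tilde A^c)+\mathbb{P}(B^c)+\mathbb{P}(B\cap C^c)+\mathbb{P}(D^c)+\mathbb{P}(\tilde A\cap D\cap F^c)$, whereas you exhibit a single favourable subevent; these are dual formulations of the same estimate.
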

\begin{proof}
We shall prove that
\begin{equation*}
    \lim_{n \uparrow \infty}\lim_{p \uparrow 1,\;\lambda \uparrow \infty} \mathbb{P}(\boldsymbol{0} \, \text{is $n$-bad}) = 0.
\end{equation*}
Take any $\epsilon>0$.
Denote respectively by $A,B,C,D,F$ the events that the conditions (1), (2), (3), (4), (5) in the definition of $n$-goodness hold for $z=\boldsymbol{0}$.
Denote also by $\tilde A$ the event that 
$R(Q_{6n})< n/2$. Note that $\tilde A\subset A$ and thus we have:
\begin{align*}
    \mathbb{P}(\boldsymbol{0} \, \text{is $n$-bad}) &= \mathbb{P}(A^c \cup B^c \cup C^c \cup D^c \cup F^c)\\
&\le \mathbb{P}(\tilde A^c \cup B^c \cup C^c \cup D^c \cup F^c)
\\ &\le \mathbb{P}(\tilde A^c) + \mathbb{P}(B^c) + \mathbb{P}(B\cap C^c) + \mathbb{P}(D^c)  + \mathbb{P}(\tilde A\cap D \cap F^c).
\end{align*}
First, partitioning the cube $Q_{6n}$ into $12^d$ subcubes $(K_i)_{1 \leq i \leq 12^d}$ of side length $n/2$, we get:
\begin{align}
    \nonumber \mathbb{P}(\tilde A^c) &= 
    \mathbb{P}(R(Q_{6n}) \geq n/2)
            \\ \nonumber &=  
    \mathbb{P}\left(\bigcup_{i=1}^{12^d} \left\lbrace  R(K_i) \geq n/2 \right\rbrace \right)
    \\ \nonumber &\leq 12^d \; \mathbb{P}(R(Q_{n/2}) \geq n/2) \qquad \text{by stationarity of the $R$'s.} 
\end{align}
Therefore, by condition (2) of Definition~\ref{Def.stabilizing}, we get $\lim_{n \uparrow \infty} \mathbb{P}(\tilde A^c) = 0$.
Also,
\begin{align*}
    \mathbb{P}(B^c)=\mathbb{P}(E \cap Q_n = \emptyset)
\end{align*}
and thus $\lim_{n \uparrow \infty} \mathbb{P}( B^c) = 0$.
Fix  $n$ large enough such that $\mathbb{P}(\tilde A^c)\le \epsilon/5$
and $\mathbb{P}( B^c)\le \epsilon/5$. For such $n$, $Q_n$, $Q_{3n}$ and $Q_{6n}$ intersect almost surely zero or a finite number of edges and vertices.

Let's now deal with the quantity $\mathbb{P}(B \cap C^c)$. We have:
\begin{align*}
    \mathbb{P}( B\cap C^c)=
     \mathbb{P}(E \cap Q_n \neq \emptyset\,\text{and}\,
    \forall \, e\in E\cap Q_n\; :    e \, \text{is closed} ).
\end{align*}
This latter probability converges to~0 when $\lambda\to\infty$ (for fixed $n$ and $r>0$). Hence, for large enough $\lambda<\infty$ (depending on $n,r$) we have $\mathbb{P}(B\cap C^c)\le \epsilon/5$.
Similarly, 
\begin{align*}
        \mathbb{P}(D^{c})
    &=\mathbb{P}(\exists v\in V\cap Q_{6n}:\, v \text{ is closed}) 
\end{align*}
converges to~0 when $p\uparrow1$ (for fixed $n$ and $r>0$),
hence, for large enough  $p<1$, we have 
$\mathbb{P}(D^c)\le \epsilon/5$.

Regarding the event $\tilde A\cap D\cap F^c$,
 note that under the event $\tilde A$, we have $$R(Q_{6n}) < n/2 < 3n/2.$$ Hence, by asymptotic essential connectedness (see Definition~\ref{Def.eac}), we have that $\text{supp}(\Lambda_{Q_{3n}}) \neq \emptyset$ and, moreover, there exists a connected component $\Delta$ of $\text{supp}(\Lambda_{Q_{6n}})$ such that $\text{supp}(\Lambda_{Q_{3n}}) \subset \Delta \subset \text{supp}(\Lambda_{Q_{6n}})$. Therefore
 $$\tilde A\cap D\cap F^c\subset 
(\exists e\in E\cap Q_{6n}:\, e \text{ is closed }).$$ 
 Clearly, for fixed $n,r$ and independently of $p$,
 $$\lim_{\lambda\to\infty} \mathbb{P}(\exists e\in E\cap Q_{6n}:\, e \text{ is closed })=0.$$
Hence, we can find $\lambda<\infty$ large enough (depending on $n,r$) such that 
$\mathbb{P}(\tilde A\cap D\cap F^c)\le \epsilon/5$.
Since $\epsilon>0$ was arbitrary,  this concludes the proof of Lemma~\ref{Claim3.supercritical}.
\end{proof}

By Lemmas~\ref{Claim2.supercritical} and~\ref{Claim3.supercritical}, 
using~\cite[Theorem 0.0]{liggett_domination_1997}, the process of $n$-good sites is 
stochastically dominated from below by a supercritical Bernoulli process
for large enough $n<\infty, \lambda<\infty,p<1$. Thus, we can make the process of $n$-good sites percolating.
By Lemma~\ref{Claim1.supercritical},
the connectivity graph $\mathcal{G}$ with  these values of $\lambda,p$ percolates, thus concluding the proof of Theorem~\ref{Thm.analogue:supercritical}.

\subsection{Proof of Proposition~\ref{prop-non-triviality-PVT-site-threshold}}
We first prove that $p^* <1$ as a consequence of Theorem~\ref{Thm.analogue:supercritical}. Then, using an adapted path-count argument, very much as in classical i.i.d. percolation on the square grid (see e.g.~\cite[Theorem 1.1]{meester_continuum_1996}), we show that $p^*>0$. \\

\noindent $\boxed{p^*<1}$ By Theorem~\ref{Thm.analogue:supercritical}, we can find large enough $p \in (0,1)$ such that $P(p,U,H)>0$ for any $H \in \left[0,\infty \right)$ and large enough $U < \infty$ depending on the chosen $H$. Choosing $H=0$, we thus obtain the existence of some $p \in (0,1)$ such that $P(p,U,0)>0$ for some $U < \infty$. Now, as noted in Section~\ref{sss.PVT-site-percolation}, the case $H=0$ corresponds to PVT site percolation and the percolation probability $P(p,U,H)=:P_{PVT}(p)$ does not depend on $U$. Thus, $P_{PVT}(p) >0$ for some $p < 1$, and so $p^*<1$. \\

\noindent $\boxed{p^*>0}$
It is known that the degree of all vertices (i.e. 0-facets) of a $d$-dimensional PVT generated by a homogeneous Poisson point process in $\mathbb{R}^d$ is almost surely equal to $d+1$, as a consequence of the following facts:
\begin{itemize}
    \item Such a PVT is almost surely regular and normal (see~\cite[Proposition PV2]{okabe_spatial_1992}).
    \item The cells of such a PVT are almost surely polytopes, i.e. convex and bounded polyhedra (see~\cite[Proposition PV1]{okabe_spatial_1992}).
    \item Each $s$-facet of a Voronoi cell in $\mathbb{R}^d$ lies on a $s$-dimensional hyperplane whose points are equidistant of $d-s+1$ atoms of the Poisson point process having generated the PVT (this is a consequence of~\cite[Property IV3]{okabe_spatial_1992} and~~\cite[Property IV4]{okabe_spatial_1992}).
\end{itemize}
 
Moreover, $S$, being a regular and normal tessellation, is locally finite. This observation combined with the degree bound allows to use an adapted path-count argument, as follows. \\

First, denote by  $\Tilde{\mathcal{G}_p}=:\mathcal{G}_{p,0,0}$ the random graph arising from the PVT site percolation process with parameter $p$ and note that percolation of $\Tilde{\mathcal{G}_p}$ is equivalent to the percolation of $\mathcal{G}_{p,U,0}$ whenever $p \in \left[0,1\right]$ and $U \geq 0$ (indeed, the fact that $H=0$ makes the percolation independent of the Cox points in our model). Hence:

\begin{equation}
    \label{eq-chap4-PVT-site-percolation} 
       P_{PVT}(p)=\mathbb{P}\left( \Tilde{\mathcal{G}}_p \, \text{ has an infinite connected component}\right).
\end{equation}

Denote by $\Phi$ the point process of crossroads (i.e. vertices of the PVT $S$) and denote by $\mathbb{P}^0$ its Palm probability. Since $Y$ is a doubly stochastic Bernoulli point process supported by the crossroads, the conditional distribution of $Y$ given $S$ is the same under the stationary probability $\mathbb{P}$ and under the Palm probability $\mathbb{P}^0$. As a matter of fact, for every crossroad $v \in V$, we have: $\mathbb{P}
^0(Y(\lbrace v \rbrace) >0 \, \vert \, S) = \mathbb{P}(Y(\lbrace v \rbrace) >0 \, \vert \, S) = p$. Moreover, conditionally to the realisation of the PVT $S$, the states of distinct crossroads (i.e. open or closed) remain independent. \\

Fix some $n \geq 1$. A \emph{self-avoiding path} $\gamma$ of length $n$ starting from the typical crossroad $\boldsymbol{0}$ is a sequence of crossroads $\boldsymbol{0}= v_1, \ldots, v_n \in V$ with $v_i \neq v_j$ for $i \neq j$ and such that $v_i$ and $v_{i+1}$ are adjacent in $S$ whenever $1 \leq i \leq n-1$. If the typical crossroad $\boldsymbol{0}$ belongs to an infinite connected component in $\Tilde{\mathcal{G}_p}$ (which we denote by $\boldsymbol{0} \leadsto \infty$), there must exist such a path with a Bernoulli point present at all crossroads of the path. Denote this event by $A_n$. \\
Then we have:
\begin{equation*}
    \mathbb{P}^0(\boldsymbol{0} \leadsto \infty) \leq \mathbb{P}^0(A_n).
\end{equation*}
Let $SAP_n$ denote the set of self-avoiding paths of length $n$ starting from the typical crossroad $\boldsymbol{0}$. By the union bound, we have:
\begin{align*}
    \mathbb{P}^0(A_n) &\leq \mathbb{E}^0 \left[ \sum_{(v_1, \ldots, v_n) = \gamma \in SAP_n} \mathbb{P}^0 \left(  \bigcap_{i=1}^n \lbrace Y(\lbrace v_i \rbrace) >0 \rbrace \, \Big\vert \, S\right)\right] \\ &= \mathbb{E}^0 \left[ \sum_{(v_1, \ldots, v_n) = \gamma \in SAP_n} p^n \right] \\ &= \mathbb{E}^0 \left[ \#(SAP_n)p^n \right],
\end{align*}
where $\#(SAP_n)$ denotes the cardinal of $SAP_n$ and where we have used the conditional independence of the states of the vertices as well as the conditional distribution of $Y$ given $S$ to get the first equality. Now, using the fact that $\forall v \in V, \deg v = d+1$ a.s., we get that $\#(SAP_n) \leq (d+1) \times d^{n-1}$. Hence:
\begin{equation*}
    \mathbb{P}^0(\boldsymbol{0} \leadsto \infty) \leq (d+1) \times d^{n-1} p^n = \frac{d+1}{d} (dp)^{n}.
\end{equation*}
When $p<1/d$, the quantity in the right-hand side converges to~0 as $n \uparrow \infty$. Hence, for $p<1/d$, we have $\mathbb{P}^0(\boldsymbol{0} \leadsto \infty) = 0$. \\
To conclude that $\Tilde{\mathcal{G}}_p$ does not percolate, we proceed as follows. For a crossroad $v \in V$, denote by $\lbrace v \leadsto \infty \rbrace$ the event that $v$ belongs to an infinite connected component of the PVT site percolation graph $\Tilde{\mathcal{G}}_p$. By Markov's inequality, we have:
\begin{align*}
    \mathbb{P}\left( \Tilde{\mathcal{G}}_p \, \text{ has an infinite connected component}\right) &=: \mathbb{P}(\exists v \in V \, : \, v \leadsto \infty) \\
    &\leq \mathbb{E}\left[ \# \lbrace v \in V \, : \, v \leadsto \infty \rbrace \right],
\end{align*}
and so, by~\eqref{eq-chap4-PVT-site-percolation}, we get:

\begin{equation}
    \label{eq-chap4-getting-back-to-PVT-site-perco-Markov}
    P_{PVT}(p) \leq \mathbb{E}\left[ \# \lbrace v \in V \, : \, v \leadsto \infty \rbrace \right].
\end{equation}
Denote by $\lambda_0$ the intensity of the point process $\Phi$ of crossroads of $S$ and fix some $p<1/d$. By the Campbell-Little-Mecke-Matthes theorem (see \cite[Theorem 6.1.28]{blaszczyszyn2020randommeasures}), we have:

\begin{align*}
    \mathbb{E}\left[ \# \lbrace v \in V \, : \, v \leadsto \infty \rbrace \right] &= \mathbb{E} \left[ \int_{\mathbb{R}^d} \mathbbm{1}\lbrace x \leadsto \infty \rbrace \Phi(dx) \right] \\
    &= \lambda_0 \int_{\mathbb{R}^d} \mathbb{E}^0 \left[ \mathbbm{1} \lbrace \boldsymbol{0} \leadsto \infty \rbrace \right]dx \\
    &= \lambda_0 \int_{\mathbb{R}^d} \mathbb{P}^0(\boldsymbol{0} \leadsto \infty)dx \\
    &=0,
\end{align*}
where we have used the fact that $p < 1/d \Rightarrow \mathbb{P}^0(\boldsymbol{0} \leadsto \infty) = 0$ to get the last equality. By~\eqref{eq-chap4-getting-back-to-PVT-site-perco-Markov}, we have $P_{PVT}(p)=0$ whenever $p<1/d$ and thus $p^* \geq 1/d >0$.

\subsection{Proof of Theorem~\ref{Thm.permanently-super-critical}}
Proving Theorem~\ref{Thm.permanently-super-critical} amounts to showing that $\mathcal{G}$ percolates with positive probability when $\lambda=0$, $p<1$ is sufficiently large  and $r<\infty$ is sufficiently large. 
We thus assume throughout the rest of this subsection that $\lambda=0$, $r$ and $p$ are the varying parameters and we still refer to $\mathcal{G}$ for the associated connectivity graph. 

Say a street $e \in E$ is \emph{hard-geometric-open} if its length is smaller than the connectivity threshold $r$: $\vert e \vert \leq r$. If not, say $e$ is \emph{hard-geometric-closed}.

Once again, we will use a coarse-graining argument. Since the development is very similar to the one exposed in the previous subsection, we only give details on which modifications should be brought to the proof of Theorem~\ref{Thm.analogue:supercritical} to prove Theorem~\ref{Thm.permanently-super-critical}. \\

To this end, we consider now the following percolation model on the integer lattice $\mathbb{Z}^d$. For $n \geq 1$, say a site $z \in \mathbb{Z}^{d}$ is $n$-good if it satisfies the following conditions:
\begin{enumerate}
\item[(1)] $R(Q_{6n}(nz)) < 6n$. 
\item[(2)] $E \cap Q_n(nz) \neq \emptyset$, i.e. the cube $Q_n(nz)$ contains a \emph{full} street.
\item[$(\hat{3})$] $\exists \, e \in E \cap Q_n(nz)$ such that $\vert e \vert \le r$. In other words, there exists a hard-geometric-open street that is fully included in the cube $Q_n(nz)$.
\item[(4)] All crossroads in $Q_{6n}(nz)$ are open, in the sense of Definition~\ref{Def.open/crossroad}.
\item[$(\hat{5})$] Every two hard-geometric-open streets $e,e' \in E \cap Q_{3n}(nz)$ (i.e. such that $\vert e \vert \leq r$ and $\vert e' \vert \leq r$) are connected by a path in $\mathcal{G}\cap Q_{6n}(nz)$.
\end{enumerate}
We say a site $z \in \mathbb{Z}^{d}$ is $n$-bad if it is not $n$-good. \\

Note that this new definition of $n$-goodness is exactly the same as the one given in the proof of Theorem~\ref{Thm.analogue:supercritical} but with conditions (3) and (5) being replaced by $(\hat{3})$ and $(\hat{5})$. In other words, openness is replaced by hard-geometric-openness. \\

Since we are now dealing with hard-geometric openness and all the other conditions are unchanged, the following is straightforward by adapting the proof of Lemma~\ref{Claim1.supercritical}:
\begin{lemma}
\label{Claim1.short.phasetransition.hardgeometric}
Percolation of the process of $n$-good sites implies percolation of the connectivity graph $\mathcal{G}$.
\end{lemma}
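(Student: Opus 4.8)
The plan is to follow the proof of Lemma~\ref{Claim1.supercritical} essentially verbatim, replacing the notion of \emph{openness} by that of \emph{hard-geometric-openness} throughout. First I would take an infinite connected component $\mathcal{C}$ of $n$-good sites (guaranteed to exist under the percolation assumption) and fix two lattice-adjacent good sites $z,z' \in \mathcal{C}$ with $\lVert z - z' \rVert_1 = 1$. By condition $(\hat{3})$ applied to each of $z$ and $z'$, there exist hard-geometric-open streets $e \in E \cap Q_n(nz)$ and $e' \in E \cap Q_n(nz')$, i.e. full streets with $\vert e \vert \le r$ and $\vert e' \vert \le r$.

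Next I would reuse the geometric containment $Q_n(nz') \subset Q_{3n}(nz)$, which holds for lattice-adjacent sites exactly as computed in the proof of Lemma~\ref{Claim1.supercritical}. This places both $e$ and $e'$ in $E \cap Q_{3n}(nz)$, where they are two hard-geometric-open streets. Conditions (4) and $(\hat{5})$ of $n$-goodness for the site $z$ then guarantee that $e$ and $e'$ are connected by a path in $\mathcal{G} \cap Q_{6n}(nz)$, hence a fortiori by a path in $\mathcal{G}$. Chaining these local connections along the infinite self-avoiding path of good sites inside $\mathcal{C}$ produces an infinite connected component of the connectivity graph $\mathcal{G}$, which is the desired conclusion.

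I expect essentially no genuine obstacle here, precisely because we are in the regime $\lambda = 0$: with no Cox points present, the openness of Definition~\ref{Def.open/closed/subcritical} reduces to the condition $\vert e \vert \le r$, so openness and hard-geometric-openness coincide for full streets, and the entire combinatorial skeleton of the argument carries over unchanged. The only point deserving (routine) attention is the role of condition (4): requiring all crossroads in $Q_{6n}(nz)$ to be relays is what makes each hard-geometric-open street usable inside $\mathcal{G}$, since a street of length at most $r$ has its two endpoint-crossroads at mutual distance $\le r$ and thus directly joined by an edge of $\mathcal{G}$ once both are relays. This ensures that the path delivered by $(\hat{5})$ is genuinely a path in the connectivity graph rather than merely in the supporting tessellation.
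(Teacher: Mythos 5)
Your proposal is correct and follows essentially the same route as the paper, which likewise proves this lemma by adapting the proof of Lemma~\ref{Claim1.supercritical} verbatim with openness replaced by hard-geometric-openness (using $(\hat{3})$, $(\hat{5})$ and the containment $Q_n(nz') \subset Q_{3n}(nz)$). Your closing observation that openness and hard-geometric-openness coincide for $\lambda=0$ is a correct and welcome clarification of why the adaptation is immediate.
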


In the same way, we get:
\begin{lemma}
\label{Claim2.short.phasetransition.hardgeometric}
For $z \in \mathbb{Z}^{d}$, set $\xi_{z} \coloneqq \mathbbm{1}\lbrace z \, \text{is $n$-good} \rbrace$. Then $(\xi_{z})_{z \in \mathbb{Z}^{d}}$ is an $18$-dependent random field.
\end{lemma}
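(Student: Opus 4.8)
The plan is to mimic the proof of Lemma~\ref{Claim2.supercritical} almost verbatim, adapting only for the replacement of conditions (3) and (5) by $(\hat{3})$ and $(\hat{5})$. As there, denote by $A_z,B_z,C_z,D_z,F_z$ the events that conditions (1), (2), $(\hat{3})$, (4), $(\hat{5})$ hold for the site $z$, so that $\xi_{z} = \mathbbm{1}\lbrace A_z \rbrace \mathbbm{1}\lbrace B_z \rbrace \mathbbm{1}\lbrace C_z \rbrace \mathbbm{1}\lbrace D_z \rbrace \mathbbm{1}\lbrace F_z \rbrace$. The crucial observation, which in fact simplifies matters relative to the earlier proof, is that since we are now in the regime $\lambda = 0$ (no users) and hard-geometric openness depends only on street lengths, the events $A_z$, $B_z$ \emph{and} $C_z$ are all $\Lambda$-measurable. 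Only $D_z$ and $F_z$ involve the relays $Y$.

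First I would fix any finite $\psi = \lbrace z_1, \ldots, z_q \rbrace \subset \mathbb{Z}^d$ with $\lVert z_i - z_j \rVert_\infty > 18$ for $i \neq j$ and condition on $\Lambda$. Conditionally on $\Lambda$, the event $D_{z_i} \cap F_{z_i}$ depends only on the restriction of the Bernoulli relay process $Y$ to the cube $Q_{6n}(nz_i)$, since condition $(\hat{5})$ only involves paths inside $Q_{6n}(nz_i)$ and condition (4) only involves crossroads therein. The separation $\lVert z_i - z_j \rVert_\infty > 18$ yields $\lVert nz_i - nz_j \rVert_\infty > 18n$, so the cubes $\lbrace Q_{6n}(nz_i) \rbrace$ are pairwise disjoint; by the complete independence of the Bernoulli point process given its support, the events $(D_{z_i} \cap F_{z_i})_{1 \le i \le q}$ are therefore conditionally independent given $\Lambda$.

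Next, after pulling the $\Lambda$-measurable indicators $\mathbbm{1}\lbrace B_{z_i} \rbrace$ and $\mathbbm{1}\lbrace C_{z_i} \rbrace$ inside the conditional expectation, I would set $f(\Lambda_{Q_{6n}(x)}) \coloneqq \mathbb{E}(\mathbbm{1}\lbrace B_x \cap C_x \cap D_x \cap F_x \rbrace \mid \Lambda)$, a bounded measurable function of $\Lambda_{Q_{6n}(x)}$, so that $\mathbb{E}(\prod_i \xi_{z_i}) = \mathbb{E}[\prod_i \mathbbm{1}\lbrace R(Q_{6n}(nz_i)) < 6n \rbrace\, f(\Lambda_{Q_{6n}(nz_i)})]$. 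The separation condition gives $\text{dist}(nz_i, \varphi \setminus \lbrace nz_i \rbrace) > 18n = 3 \times 6n$ for $\varphi = \lbrace nz_1, \ldots, nz_q \rbrace$, so condition (3) of Definition~\ref{Def.stabilizing}, applied with $n$ replaced by $6n$, factorizes the expectation as $\prod_i \mathbb{E}[\mathbbm{1}\lbrace R(Q_{6n}(nz_i)) < 6n \rbrace\, f(\Lambda_{Q_{6n}(nz_i)})] = \prod_i \mathbb{E}(\xi_{z_i})$, which is exactly the claimed $18$-dependence.

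I do not expect a genuine obstacle here; this lemma is a routine transcription. The only point requiring care is to verify that hard-geometric openness $(\hat{3})$ is indeed $\Lambda$-measurable and that, conditionally on $\Lambda$, the remaining events depend only on $Y$ restricted to the relevant disjoint cubes. Both are immediate, because with $\lambda = 0$ the connectivity graph $\mathcal{G}$ is built solely from the relays $Y$ and the street geometry, so no Poisson contribution enters and the conditional independence structure is driven entirely by the Bernoulli process $Y$.
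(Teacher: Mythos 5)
Your proposal is correct and follows essentially the same route as the paper: the paper likewise observes that the indicators of conditions (1), (2) and $(\hat{3})$ are all $\Lambda$-measurable, that conditionally on $\Lambda$ the events $D_{z_i}\cap\hat F_{z_i}$ depend only on $Y$ restricted to the disjoint cubes $Q_{6n}(nz_i)$ (so complete independence of the Bernoulli process applies), and then invokes condition (3) of Definition~\ref{Def.stabilizing} with $n$ replaced by $6n$ to factorize, exactly as you do. The only cosmetic difference is that you fold $\mathbbm{1}\lbrace B_x\rbrace$ and $\mathbbm{1}\lbrace \hat C_x\rbrace$ into the conditional expectation defining $f$, whereas the paper keeps $\mathbbm{1}\lbrace \hat C_x\rbrace$ outside as a function of $\Lambda_{Q_n(nz)}$; both yield the same bounded measurable function of $\Lambda_{Q_{6n}(nz)}$.
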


\begin{proof}
It suffices to adapt the proof of Lemma~\ref{Claim2.supercritical} as follows. 

Denote respectively by $A_z,B_z,\hat{C}_z,D_z,\hat{F}_z$ the events that the conditions (1), (2), $(\hat{3})$, (4) and $(\hat{5})$ in the definition of $n$-goodness hold for $z \in \mathbb{Z}^{d}$. 

Note first that whenever $z \in \mathbb{Z}^{d}$, the indicators $\mathbbm{1} \lbrace A_z \rbrace$, $\mathbbm{1} \lbrace B_z \rbrace$, $\mathbbm{1} \lbrace \hat{C}_z \rbrace$ are all $\Lambda$-measurable. Doing the exact same calculations as in the proof of Lemma~\ref{Claim2.supercritical}, we arrive at dealing with the quantity 

\begin{align}
\nonumber \mathbb{E}\left(\prod_{i=1}^{q} \xi'_{z_i} \right) &= \mathbb{E}\left[\prod_{i=1}^{q} \mathbbm{1}\lbrace A_{z_i} \rbrace \mathbbm{1}\lbrace B_{z_i}\rbrace \mathbbm{1} \lbrace \hat{C}_{z_i} \rbrace \mathbb{E}\left(  \prod_{i=1}^{q}\mathbbm{1}\lbrace D_{z_i} \cap \hat{F}_{z_i}   \rbrace \Bigg\vert \Lambda \right)\right].
\end{align}
Now, note that conditioned on $\Lambda$, for each $1 \leq i \leq q$, the event $ D_{z_i} \cap \hat{F}_{z_i}  $ only depends on the configuration of  $Y$ inside of the cube $Q_{6n}(nz_i)$. We can thus proceed as in the aforementioned proof by using the complete independence of $Y$ (recall that, given $\Lambda$, $Y$ has the distribution of a Bernoulli point process). Finally, it is clear that $\mathbbm{1} \lbrace \hat{C}_z \rbrace$ is a bounded deterministic function of $\Lambda_{Q_n(nz)}$ and that $\mathbb{E}\left( \mathbbm{1}\lbrace  D_{z} \cap \hat{F}_{z}  \rbrace \Bigg\vert \Lambda \right)$ is a bounded deterministic function of $\Lambda_{Q_{6n}(nz)}$. This allows to proceed exactly as in the proof of Lemma~\ref{Claim2.supercritical} and conclude.
\end{proof}
Finally, for the hard-geometric model, we still have:
\begin{lemma}\label{Claim3.short.phasetransition.hardgeometric}
\begin{equation*}
    \lim_{n \uparrow \infty}\lim_{ p\uparrow 1, r \uparrow \infty}
    \mathbb{P}(\boldsymbol{0} \, \text{is $n$-good}) = 1.
\end{equation*}
\end{lemma}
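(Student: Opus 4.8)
The plan is to mirror the proof of Lemma~\ref{Claim3.supercritical} as closely as possible, replacing the mechanism ``$\lambda\uparrow\infty$ opens long streets'' by ``$r\uparrow\infty$ renders every street hard-geometric-open''. As before I would work with the complementary event and bound $\mathbb{P}(\boldsymbol{0}\text{ is }n\text{-bad})$ from above. Write $A,B,\hat{C},D,\hat{F}$ for the events that conditions (1), (2), $(\hat{3})$, (4), $(\hat{5})$ hold at $z=\boldsymbol{0}$, and let $\tilde{A}$ denote the event $R(Q_{6n})<n/2$, so that $\tilde{A}\subset A$. The union bound then gives
\begin{equation*}
\mathbb{P}(\boldsymbol{0}\text{ is }n\text{-bad})\le \mathbb{P}(\tilde{A}^c)+\mathbb{P}(B^c)+\mathbb{P}(B\cap\hat{C}^c)+\mathbb{P}(D^c)+\mathbb{P}(\tilde{A}\cap D\cap\hat{F}^c).
\end{equation*}

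The first four terms are dispatched exactly as in Lemma~\ref{Claim3.supercritical}. By Proposition~\ref{prop.PVT-stabilizes} and condition~(2) of Definition~\ref{Def.stabilizing}, one has $\mathbb{P}(\tilde{A}^c)\to 0$ and $\mathbb{P}(B^c)=\mathbb{P}(E\cap Q_n=\emptyset)\to 0$ as $n\uparrow\infty$, so I would first fix $n$ large enough that each is at most $\epsilon/5$; for such $n$, $Q_{6n}$ meets almost surely finitely many edges and vertices. The term $\mathbb{P}(B\cap\hat{C}^c)$ is the probability that $Q_n$ contains a full street but none of its full streets has length $\le r$; since there are almost surely finitely many such streets, each of finite length, this tends to $0$ as $r\uparrow\infty$. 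Likewise $\mathbb{P}(D^c)=\mathbb{P}(\exists\, v\in V\cap Q_{6n}:v\text{ closed})\to 0$ as $p\uparrow 1$. I would then choose $p<1$ and $r<\infty$ large enough to make these two terms at most $\epsilon/5$ each.

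The crux is the last term, and I expect the only genuinely new point is the inclusion
\begin{equation*}
\tilde{A}\cap D\cap\hat{F}^c\subset\bigl(\exists\, e\in E\cap Q_{6n}:\ e\text{ is hard-geometric-closed}\bigr).
\end{equation*}
Since $\lambda=0$ we have $Z=Y$, so under $D$ every crossroad in $Q_{6n}$ is a vertex of $\mathcal{G}$. Because PVT edges are straight segments, a hard-geometric-open edge ($\vert e\vert\le r$) has its two endpoint-crossroads at Euclidean distance $\le r$, hence directly joined in $\mathcal{G}$ under $D$. Under $\tilde{A}$ we have $R(Q_{6n})<n/2<3n/2$, so asymptotic essential connectedness (Definition~\ref{Def.eac}) places $\text{supp}(\Lambda_{Q_{3n}})$, the street system within $Q_{3n}$, inside a single connected component $\Delta$ of $\text{supp}(\Lambda_{Q_{6n}})$. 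Consequently any two streets $e,e'\in E\cap Q_{3n}$ are joined by a chain of pairwise-adjacent edges of $S$ lying in $Q_{6n}$; if \emph{every} edge in $Q_{6n}$ is hard-geometric-open, then, combining the endpoint observation with $D$ at the shared crossroads, this chain lifts to a path in $\mathcal{G}\cap Q_{6n}$, i.e. $\hat{F}$ holds. The contrapositive is precisely the displayed inclusion, and for fixed $n$ its right-hand event has probability $\mathbb{P}(\exists\, e\in E\cap Q_{6n}:\vert e\vert>r)\to 0$ as $r\uparrow\infty$, by almost sure finiteness of the edges meeting $Q_{6n}$ and their finite lengths. Enlarging $r$ once more pushes $\mathbb{P}(\tilde{A}\cap D\cap\hat{F}^c)\le\epsilon/5$, and summing the five bounds gives $\mathbb{P}(\boldsymbol{0}\text{ is }n\text{-bad})\le\epsilon$; letting $\epsilon\downarrow 0$ proves the claim.

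The main obstacle is making this last inclusion airtight: one must convert a path in the \emph{street system} (furnished by asymptotic essential connectedness) into a path in the \emph{connectivity graph} $\mathcal{G}$, which requires that \emph{all} intermediate edges of the chain --- not merely the two endpoints $e,e'$ --- be short enough ($\le r$) and that \emph{all} intermediate crossroads be open. This is exactly why the failure of $\hat{F}$ is captured by the existence of a single hard-geometric-closed edge somewhere in $Q_{6n}$, and why condition (4) must be imposed on the larger cube $Q_{6n}$ rather than on $Q_{3n}$.
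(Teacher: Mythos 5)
Your proposal is correct and follows essentially the same route as the paper: the same five-term union bound, the same treatment of $\tilde{A}^c$, $B^c$, $B\cap\hat{C}^c$ and $D^c$, and the same key inclusion $\tilde{A}\cap D\cap\hat{F}^c\subset\{\exists\, e\in E\cap Q_{6n}:\ \vert e\vert>r\}$ via asymptotic essential connectedness. If anything, you justify that inclusion (lifting a chain of adjacent short edges in the street system to a path in $\mathcal{G}$ through open crossroads) in more detail than the paper, which simply asserts it by analogy with Lemma~\ref{Claim3.supercritical}.
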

\begin{proof}
Again, we shall prove that:
\begin{equation*}
    \lim_{n \uparrow \infty}\lim_{p \uparrow 1, r \uparrow \infty} \mathbb{P}(\boldsymbol{0} \, \text{is $n$-bad}) = 0.
\end{equation*}
Take any $\epsilon>0$. We adapt the proof of Lemma~\ref{Claim3.supercritical} as follows.
Denote respectively by $A,B,\hat{C},D,\hat{F}$ the events that the conditions (1), (2), $(\hat{3})$, (4) and $(\hat{5})$ in the definition of $n$-goodness hold for $z=\boldsymbol{0}$.
Denote also by $\tilde A$ the event that 
$R(Q_{6n})< n/2$. As in the aforementioned proof, we have
\begin{align*}
    \mathbb{P}(\boldsymbol{0} \, \text{is $n$-bad})  &\le \mathbb{P}(\tilde A^c) + \mathbb{P}(B^c) + \mathbb{P}(B \cap \hat{C}^c) + \mathbb{P}(D^c) + \mathbb{P}(\tilde A \cap D \cap \hat{F}^c). 
\end{align*}
In the above inequality, we deal with the first, second and fourth quantities as before and so  we can fix $n$ large enough such that $\mathbb{P}(\tilde A^c)\le \epsilon/5$
and $\mathbb{P}(B^c)\le \epsilon/5$. For such $n$, $Q_n$, $Q_{3n}$ and $Q_{6n}$ intersect almost surely zero or finitely many edges and vertices. We can then fix $p<1$ large enough such that $\mathbb{P}(D^c)\le \epsilon/5$.

Let's now deal with the quantity $\mathbb{P}(B \cap \hat{C}^c)$. We have:

\begin{align*}
    \mathbb{P}(B \cap \hat{C}^c) &=
     \mathbb{P}(E \cap Q_n \neq \emptyset\;\text{and}\;
    \forall \, e\in E\cap Q_n\;    \vert e \vert > r ) \\ &= \mathbb{E}\left(\mathbbm{1}\lbrace E \cap Q_n \neq \emptyset \rbrace \prod_{e \in E\cap Q_n} \mathbbm{1}\lbrace \vert e \vert > r \rbrace \right).
\end{align*}

Note first that on the event $\lbrace E \cap Q_n \neq \emptyset \rbrace$, the latter product is non-empty. Moreover, since $E\cap Q_n$ contains finitely many edges (recall that $n$ is fixed) and since we have $$\forall e \in E, \; \lim_{r \uparrow \infty} \mathbbm{1}\lbrace \vert e \vert > r \rbrace = 0 \; \text{a.s.},$$ by dominated convergence, we have that the latter expectation  converges to~0 when $r\to\infty$ (for fixed $n$). Therefore, $\lim_{r \uparrow \infty} \mathbb{P}(B \cap \hat{C}^c)=0$ (for fixed $n$). 

Regarding the event $\tilde A\cap D \cap \hat{F}^c$, we proceed as before and use asymptotic essential connectedness to get
 \begin{equation*}
     \tilde A\cap D\cap \hat{F}^c \subset (\exists e\in E\cap Q_{6n}:\, e \text{ is hard-geometric-closed}).
 \end{equation*}
 Clearly, for fixed $n$,
\begin{align*} 
\lim_{r\to\infty} \mathbb{P}(\exists e\in E\cap Q_{6n}:\, e \text{ is hard-geometric-closed})& =\lim_{r\to\infty} \mathbb{P}(\exists e\in E\cap Q_{6n}:\, \vert e \vert >r) \\ &=0.
\end{align*}

Hence, we can find $r<\infty$ large enough (depending on $n$) such that 
$\mathbb{P}(B \cap \hat{C}^c)\le \epsilon/5$ and $\mathbb{P}(\tilde A \cap D \cap \hat{F}^c)\le \epsilon/5$. Since $\epsilon >0$ was arbitrary, this concludes the proof of Lemma~\ref{Claim3.short.phasetransition.hardgeometric}.

\end{proof}

By Lemmas~\ref{Claim2.short.phasetransition.hardgeometric} and~\ref{Claim3.short.phasetransition.hardgeometric}, 
using~\cite[Theorem 0.0]{liggett_domination_1997}, the process of $n$-good sites is 
stochastically dominated from below by a supercritical Bernoulli process
for large enough $n<\infty, p<1, r<\infty$. Thus, we can make the process of $n$-good sites percolating.
By Lemma~\ref{Claim1.short.phasetransition.hardgeometric},
the connectivity graph $\mathcal{G}$ with these values of $p$ and $r$ percolates, thus concluding the proof of  Theorem~\ref{Thm.permanently-super-critical}.

\section{Model extensions and concluding remarks}
\label{S.Conclusion}

In this paper, we have introduced and studied a new model for continuum line-of-sight percolation in a random environment. This mathematical model can be seen as a good candidate for the modelling of telecommunications networks in an urban scenario with regular obstructions: the random support (equivalent to the street system of the city) is modelled by a PVT. Users are dropped on the edges of the PVT according to a Cox point process with linear intensity $\lambda$. Obstructive connectivity conditions require the presence of an additional Bernoulli point process (representing relays which could either be real users or physical antennas) at the vertices of the PVT (crossroads of the city) to ensure connectivity between adjacent streets. We have proven via coarse-graining arguments that a minimal relay proportion $p > p^*$ is necessary to allow for percolation of the connectivity graph, that a non-trivial subcritical phase exists whenever the connectivity threshold $r$ is not too large and that a supercritical phase exists for all $r>0$. 
Moreover, we also performed Monte-Carlo simulations to get numerical estimations of critical parameters of our model and estimate the frontiers between the different connectivity regimes and particular cases of our model (PVT site percolation, PVT hard-geometric bond percolation and PVT soft-geometric bond percolation). \\

Our line-of-sight percolation model in a random environment has  many possible generalizations.  
The most obvious one is  the \emph{dual tessellation} of the PVT: a~\emph{Poisson-Delaunay} tessellation~\cite[Section~9.2]{chiu_stochastic_2013}, which is known to  be  stabilizing and essentially asymptotically connected
(see~\cite[Exercise 3.2.7]{jahnel2020probabilistic} and~\cite[Section 3.1]{hirsch2018continuum}).
More interestingly, one can try to prove these stabilization and asymptotic essential connectedness properties (which are fundamental for our approach) for the \emph{generalized Poisson-Voronoi weighted tessellation}~\cite{flimmel2019limit}, including, as special cases, \emph{Laguerre} and \emph{Johnson-Mehl} tessellations. Note that in this latter paper, a different stabilization property is used to prove expectation and variance asymptotics, as well as central limit theorems for unbiased and asymptotically consistent estimators of geometric statistics of the typical cell.
Finally, one may ask whether the Poisson process, which is underlying in all the above models,  can be replaced by a more general point process, sufficiently mixing~\cite{poinas2019mixing} or having a sufficiently fast decay of correlations~\cite{blaszczyszyn2019limit}.
These latter properties (mixing property and fast decay of correlations) are originally used or developed for the aforementioned context of the limit theory for the typical cell and it is not clear whether they can be used in our percolation context.
Concluding, we believe our model paves the way to the study of a new class of random connection models in a random environment that not only conditions the locations of points but also 
 the connection function. 
As a natural generalization of the LOS connection function,  one can consider connectivity conditions based on two connection radii: one for the nodes of the network being located on the same edge of the random support and another one (typically smaller) for non-line-of-sight (NLOS) connections; i.e. nodes of the network being located on different edges of the random support. This could be of  particular interest for more realistic models of telecommunications networks.

\section*{Acknowledgements}
This research work has been funded by a CIFRE contract between Orange S.A. and Inria. The first author would like to thank Christian Hirsch for many interesting and valuable discussions on the concepts of stabilization and asymptotic essential connectedness. The authors thank the two anonymous referees, whose remarks allowed to substantially improve the quality of the manuscript. 

\bibliographystyle{APT}

\end{document}